\def\namedlabel#1#2{\begingroup
 #2%
 \def\@currentlabel{#2}%
 \phantomsection\label{#1}\endgroup
}
\renewcommand{\PrintDOI}[1]{\href{http://dx.doi.org/\detokenize{#1}}{doi: \detokenize{#1}}%
	\IfEmptyBibField{pages}{, (to appear in print)}{}}
\theoremstyle{plain}
\newtheorem*{theorem*}{Theorem}
\newtheorem*{thmex*}{Theorem~\ref{example}}
\newtheorem*{thmasymp*}{Theorem~\ref{thmAsymp}}
\newtheorem{theorem}{Theorem}[section]
\newtheorem{lemma}[theorem]{Lemma}
\newtheorem{proposition}[theorem]{Proposition}
\newtheorem{example}[theorem]{Example}
\theoremstyle{definition}
\newtheorem{definition}[theorem]{Definition}
\newcommand{\ben}{\begin{enumerate}}
\newcommand{\een}{\end{enumerate}}
\newcommand{\ed}{\end{document}}
\definecolor{rrr}{rgb}{.9,0,.1}
\definecolor{rr}{rgb}{.8,0,.3}
\newcommand{\tr}{\triangleright}
\title[A Polynomial Invariant of Stuck Knots]{Generalized Quandle Polynomials and Their Applications to Stuquandles, Stuck Links, and RNA Folding}
\author[E. Bondarenko]{Ekaterina  Bondarenko}
\address{Hamilton College, Clinton, NY, USA}
\email{ebondare@hamilton.edu}
\author[J. Ceniceros]{Jose Ceniceros}
\address{Hamilton College, Clinton, NY, USA}
\email{jcenicer@hamilton.edu}
\author[M. Elhamadi]{Mohamed Elhamdadi}
\address{University of South Florida, Tampa, Florida, USA}
\email{emohamed@usf.edu}
\author[B. Jones]{Brooke Jones}
\address{University of South Florida, Tampa, Florida, USA}
\email{brookejones1@usf.edu}
\begin{document}

\maketitle

\begin{abstract}
We introduce a generalization of the quandle polynomial.  We prove that our polynomial is an invariant of stuquandles.  Furthermore, we use the invariant of stuquandles to define a polynomial invariant of stuck links. As a byproduct, we obtain a polynomial invariant of RNA foldings. Lastly, we provide explicit computations of our polynomial invariant for both stuck links and RNA foldings.
\end{abstract}
\parbox{5.5in} {\textsc{Keywords:} quandles, quandle polynomial, stuquandles, stuquandle polynomial, stuck links, arc diagrams, RNA folding

                \smallskip
                
                \textsc{2020 MSC:} 57K12}

\section{Introduction}\label{intro}
Stuck links can be considered as a generalization of singular links.  They were introduced in \cite{B}.  Stuck links are physical links where the strands are stuck together in a fixed position, with one strand above the other. As a result, their projections have two types of crossings: the classical crossings from classical knot theory and a different type of crossing called a stuck crossing. Stuck knots and links have applications to RNA folding through a transformation relating stuck link diagrams to arc diagrams of an RNA folding, see \cite{B,CEKL, CEMR}. Additionally, the use of this generalization of classical knot theory is uniquely equipped to model both the entanglement and intra-chain interactions of a biomolecule as described in \cite{B}. 

In \cite{CEKL}, a generating set of the oriented stuck Reidemeister
moves for oriented stuck links was introduced.  The generating set of oriented stuck Reidemeister moves was used to define an algebraic structure called \emph{stuquandle}. The motivation of the stuquandle algebraic structure was to axiomatize the oriented stuck Reidemeister moves, thus allowing the construction of the fundamental stuquandle associated with a given stuck link. Using the fundamental quandle, the coloring counting invariant of stuck links was defined. As a consequence, the coloring counting invariant for arc diagrams of RNA foldings was constructed through the use of
stuck link diagrams. The coloring counting invariant of stuck links is defined as the cardinality of the set of homomorphisms from the fundamental stuquandle to a finite stuquandle.
Although the stuquandle counting invariant is a useful invariant of stuck links, it is not strong enough, and thus, we define an enhancement of it in this article. In the case of racks and quandles, the study of enhancements of the counting invariant is a very active area of research, see \cite{CNS, CJKLS, CES, CEGS}. 

Specifically, in \cite{N}, a two-variable polynomial from finite quandles encodes a set with multiplicities arising from counting trivial actions of elements on other elements of the quandle. This polynomial was used to define a polynomial invariant of classical links and was shown to be an enhancement of the quandle coloring counting invariant. Additionally, the quandle polynomial invariant was extended to the case of singular knots in \cite{CCE1}. In this article, we generalize these polynomials to the case of stuck links. We then use these polynomials to define the an enhancement of the coloring counting invariant of stuck links and of RNA foldings. Our approach in this article is different than the enhancement of the stuquandle counting invariant in \cite{CEMR}, which was achieved by assigning Boltzmann weights at both classical and stuck crossings and thus leading to a single-variable, a two-variable and a three-variable polynomial invariant of stuck links and applied to arc diagrams of RNA foldings.

 This article is organized as follows.  In Section~\ref{RSK}, we review the basics of stuck knots and their diagrammatics. In Section~\ref{sec3}, we recall the relationship between stuck links and arc diagrams.  Specifically, we review the transformation to obtain a stuck link diagram from an arc diagram and vice versa.  In Section~\ref{sec4}, we discuss the algebraic structures motivated by the diagrammatic representation of stuck knots and the fundamental stuquandle, leading to the stuquandle counting invariant.
  Section~\ref{quandle polynomial} reviews the definition of the quandle polynomial, the subquandle polynomial, and the link invariants obtained from the subquandle polynomial.  A generalization of the quandle polynomial is introduced in Section~\ref{sec6}.  We end this section by proving that this generalization is an invariant of stuquandles and then use the generalization to define a polynomial invariant of stuck links.  Lastly, in Section~\ref{Examples}, we provide explicit computations of our invariants for both stuck links and RNA foldings.  In the case of RNA foldings, we give an example of two arc diagrams that are not distinguished by the stuquandle counting invariant but are distinguished by the substuquandle polynomial invariant.

\section{Review of Stuck Knots and Links}\label{RSK}
Stuck links generalize singular links and were introduced in \cite{B}. Furthermore, stuck links are useful for modeling biomolecules, particularly RNA folding, as discussed in \cite{B,CEKL,CEMR}. This article follows the definitions and conventions introduced in\cite{B}.

A stuck link diagram may include classical and stuck crossings. A stuck crossing is a singular crossing with additional information about the stuck position. Figure \ref{SX} shows a singular crossing, while Figure \ref{StuckX} illustrates the two types of stuck crossings.
\begin{figure}[h!]
    \centering
    \includegraphics[scale=.25]{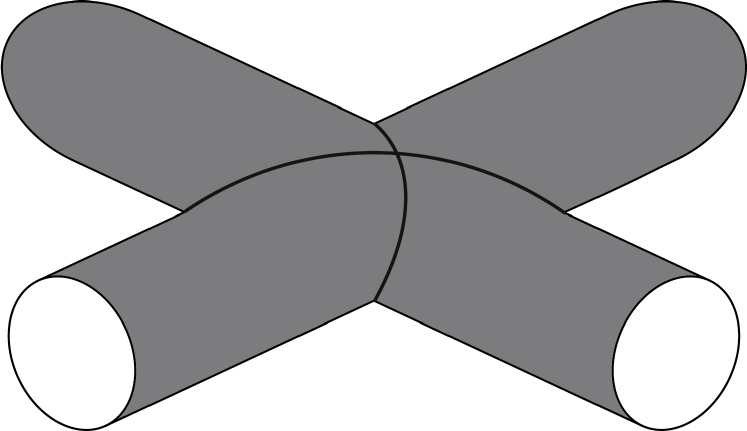}\hspace{2cm}
    \includegraphics[scale=1]{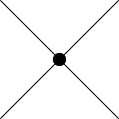}
    \caption{Singular crossing in a singular link (left) and a singular crossing in a singular link diagram (right).}
    \label{SX}
\end{figure}

\begin{figure}[h!]
    \centering
    \includegraphics[scale=.25]{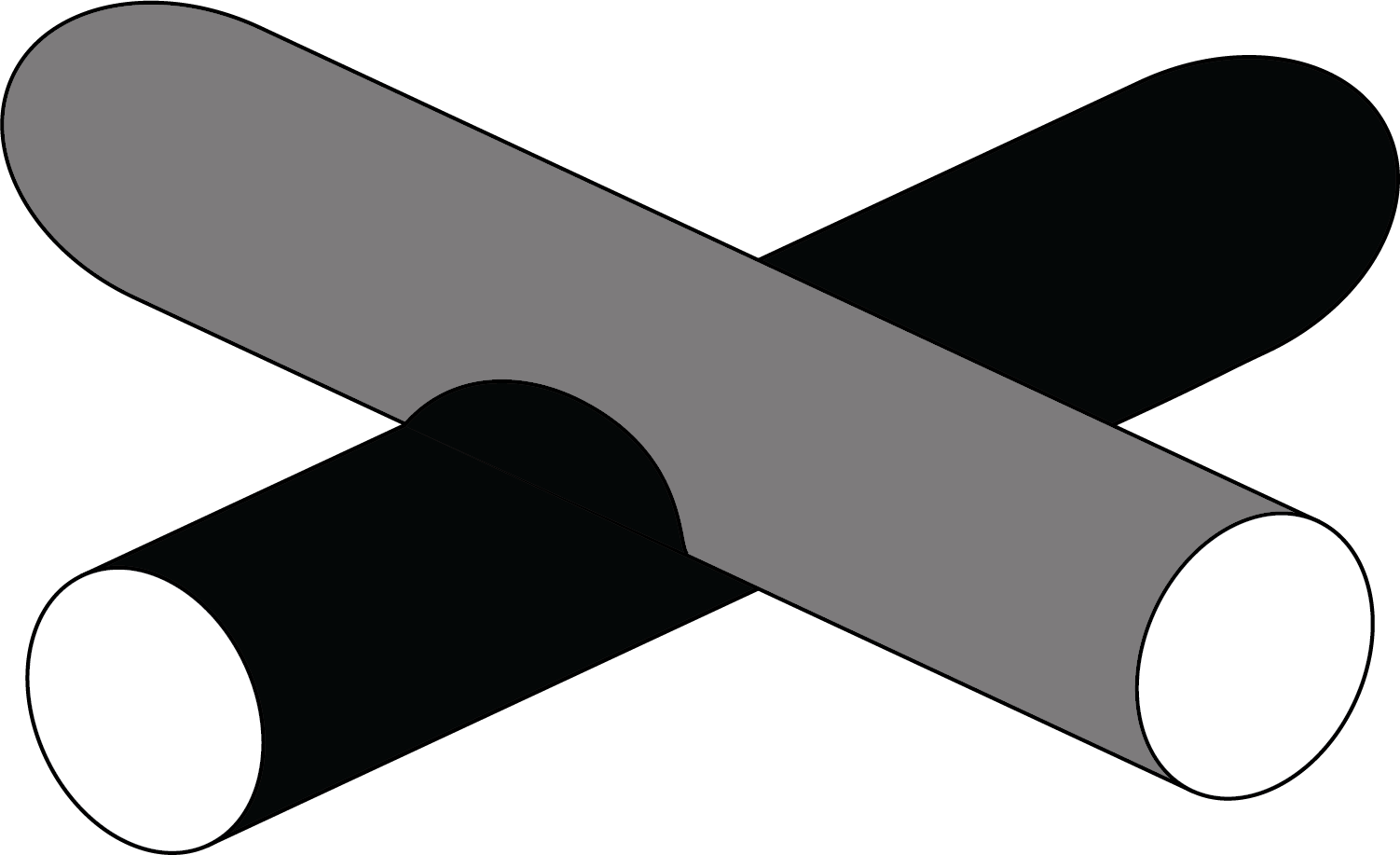}\hspace{2cm}
    \includegraphics[scale=1]{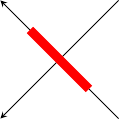}

\vspace{.5cm}
    \includegraphics[scale=.25]{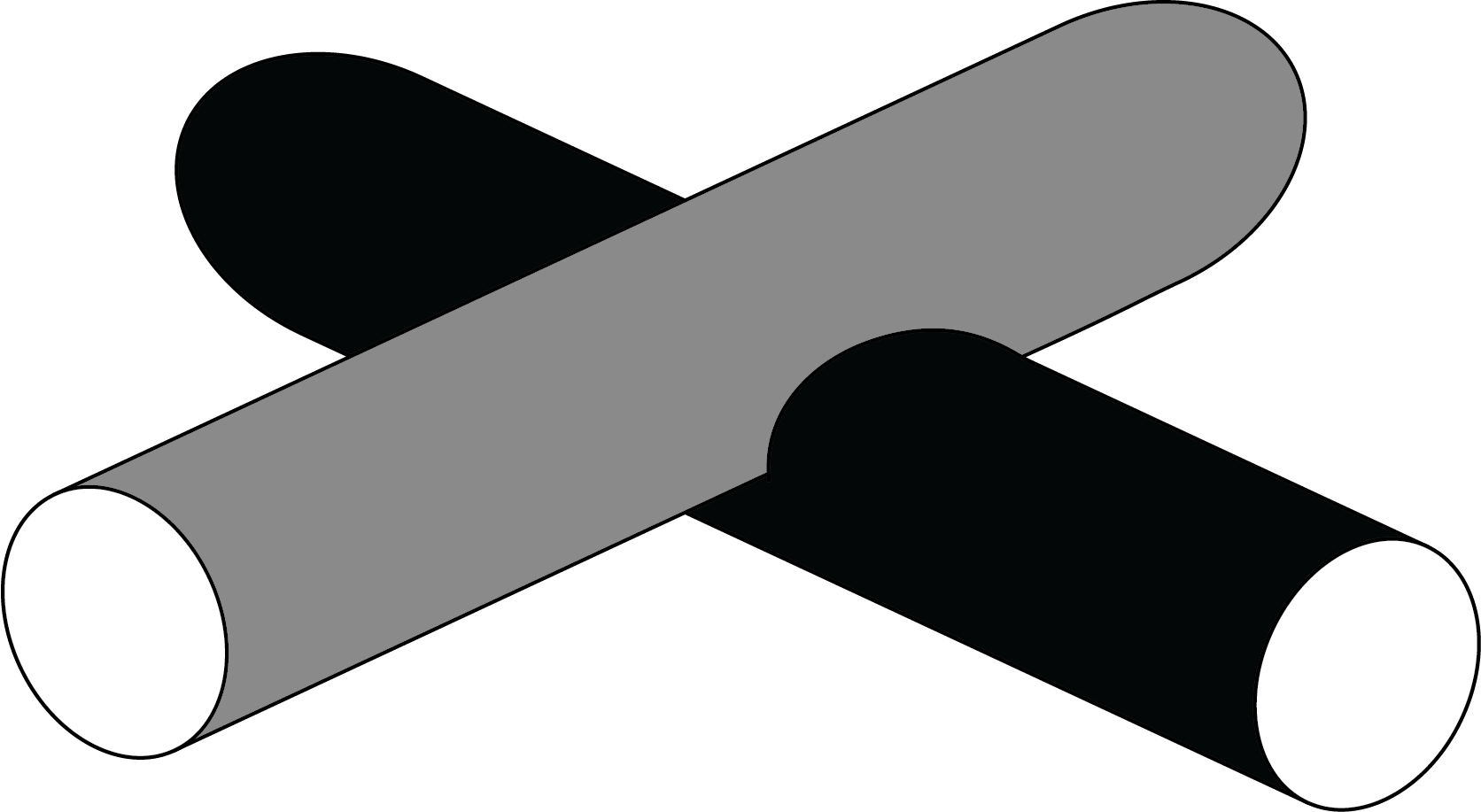}\hspace{2cm}
    \includegraphics[scale=1]{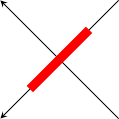}
\caption{Stuck crossings in a stuck link (left) and stuck crossings in a stuck link diagram (right).}
    \label{StuckX}
\end{figure}
We will use a thick bar on the over arc at a stuck crossing in a stuck link diagram; see Figure \ref{StuckX}. Following the right-hand rule, we will refer to the top crossings in Figure \ref{StuckX} as positive stuck crossings, while the bottom crossings will be negative stuck crossings.

\begin{figure}[h!]
    \centering
\includegraphics[scale=1]{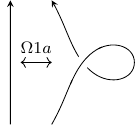} \hspace{2cm}
\includegraphics{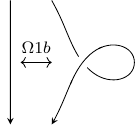}

\vspace{.5cm}
\includegraphics{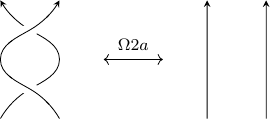}

\vspace{.5cm}
\includegraphics{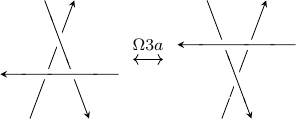}

\vspace{.5cm}
\includegraphics{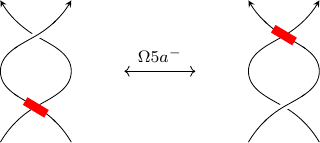}\hspace{2cm}
\includegraphics{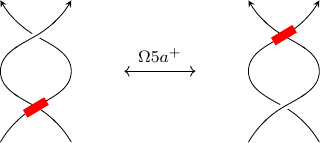}

\vspace{.5cm}
\includegraphics{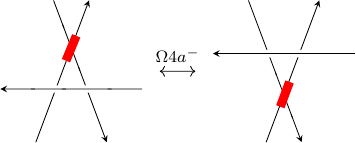}\hspace{2cm}
\includegraphics{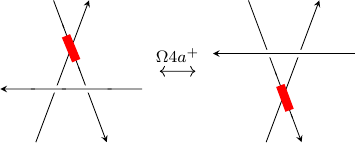}

\vspace{.5cm}
\includegraphics{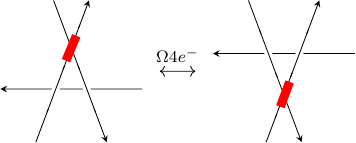}\hspace{2cm}
\includegraphics{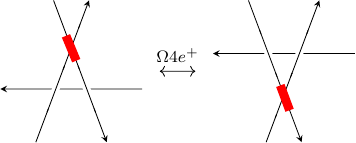}
    \caption{A generating set of oriented stuck Reidemeister moves.}
    \label{stuckrmoves}
\end{figure}

In \cite{CEKL}, the set of moves shown in Figure~\ref{stuckrmoves} was identified as a generating set of oriented stuck Reidemeister moves. We will follow the naming convention established in that paper. The oriented stuck Reidemeister moves are essential for studying stuck links through the use of stuck link diagrams. Two stuck link diagrams are equivalent if one can be transformed into the other by a finite sequence of planar isotopies and oriented stuck Reidemeister moves. Thus, similar to classical links, a \emph{stuck link}  is defined as an equivalence class of stuck link diagrams modulo the oriented stuck Reidemeister moves.

\section{Stuck links and Arc Diagrams}\label{sec3}
In this section, we review arc diagrams and the relationship between stuck links and arc diagrams. In \cite{KM}, Kauffmann and Magarshak introduced arc diagrams as a combinatorial way of studying the topology of RNA folding. The motivation of arc diagrams, as noted in \cite{KM}, is that the RNA molecule is a long chain consisting of the bases A (adenine), C (cytosine), U (uracil), and G (guanine). The pairs (A and U) and (C and G) are able to form bonds with each other. Therefore, an RNA molecule may be represented as a linear sequence of the letters A, C, U, and G, and folding of the molecule is a possible pairing of a given sequence of bases for a complete description and theory of arc diagrams see \cite{KM}. The following example is taken from \cite{KM}. First, consider the chain $\cdots CCCAAAACCCCCUUUUCCC\cdots$ with the folding given in Figure~\ref{RNAchain}.
\begin{figure}[ht]
    \centering
    \includegraphics[scale=.5]{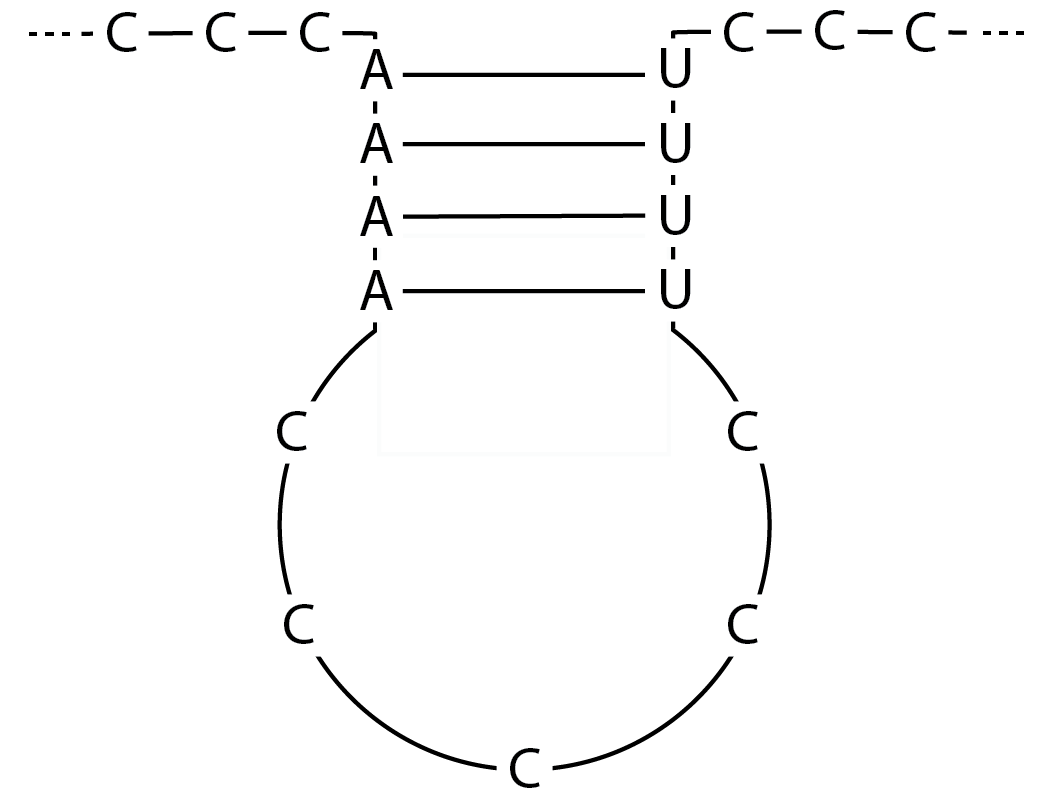}
    \caption{An example of an RNA folding.}
    \label{RNAchain}
\end{figure}
\\

In Figure~\ref{RNAfolding}, the diagram on the left is an abstraction of the folding introduced by Kauffmann and Magarshak, called an \emph{arc diagram}. In our work, we will use the convention from \cite{B} by further abstracting the diagram. We will replace the four connecting arcs with just one solid gray stripe, as shown on the right in Figure~\ref{RNAfolding}.

\begin{figure}[ht]
    \centering
    \includegraphics[scale=.5]{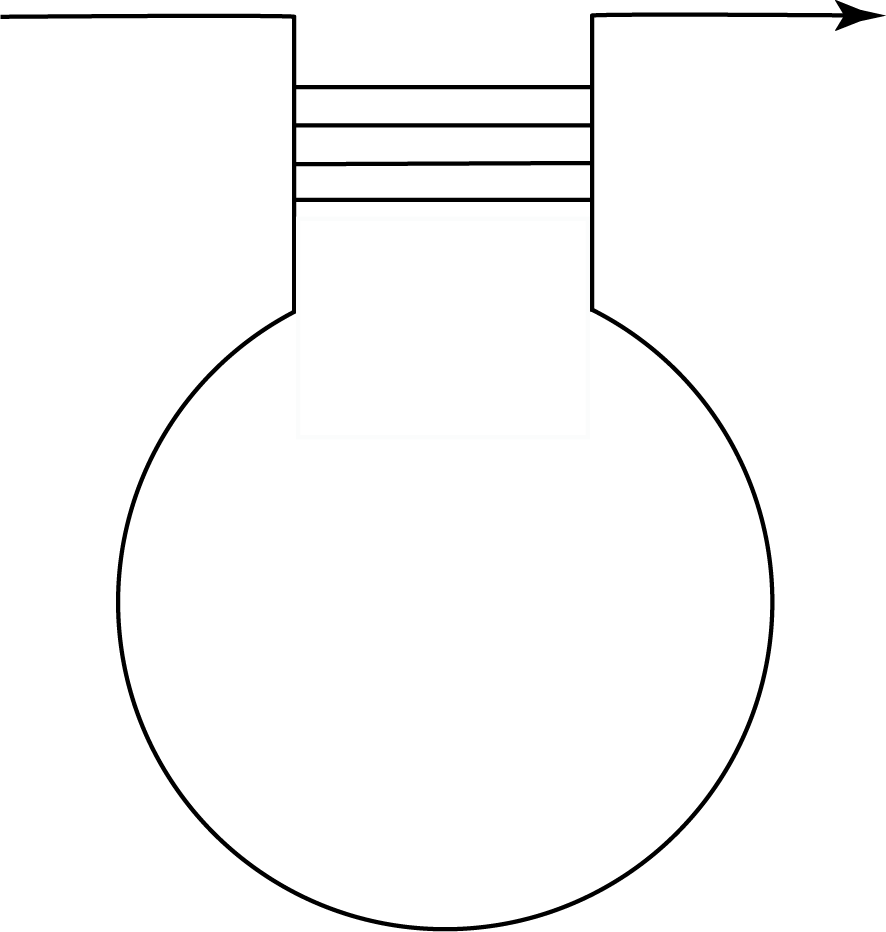}
    \hspace{1cm}
    \includegraphics[scale=.5]{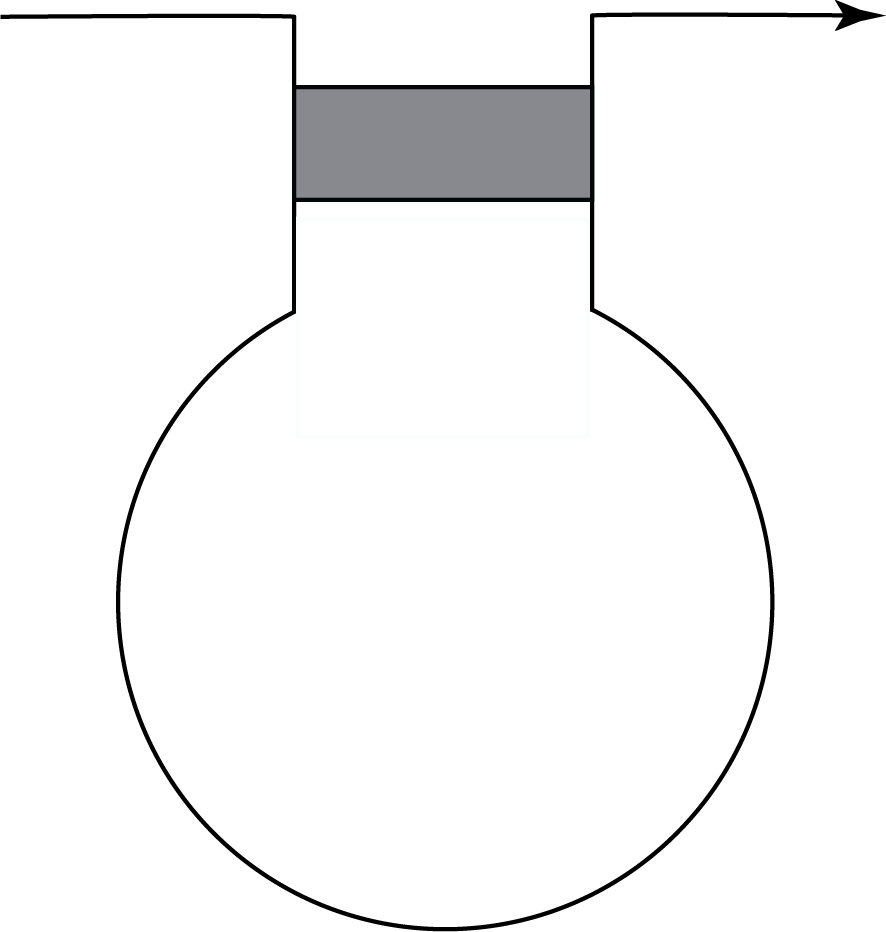}
    \caption{Arc diagram of RNA folding (left) and an arc diagram of RNA folding with gray stripe (right). }
    \label{RNAfolding}
\end{figure}
 In \cite{B}, the following transformation is defined and makes the connection between stuck links and arc diagrams. In order to change from an arc diagram of RNA folding to a stuck link, we apply the transformation, $T$, see Figure~\ref{TRNAfolding1}. The same transformation may be used when starting with a negative stuck crossing. See Figure~\ref{TRNAfolding2}.  
\begin{figure}[ht]
    \centering
    \includegraphics[scale=.5]{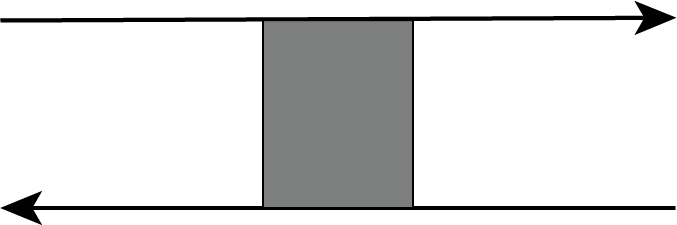}
    \hspace{.6cm}
    \includegraphics[scale=1]{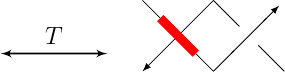}
    \includegraphics[scale=1]{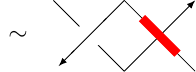}

    \vspace{1.5cm}
    \includegraphics[scale=.5]{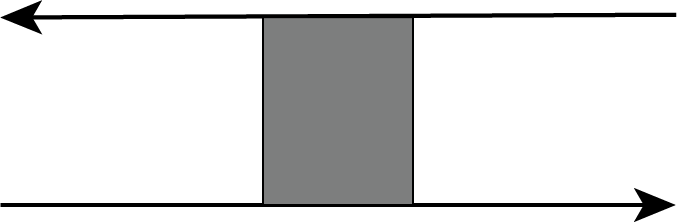}
    \hspace{.6cm}
    \includegraphics[scale=1]{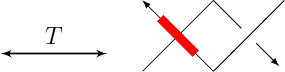}
    \includegraphics[scale=1]{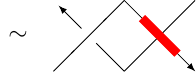}
    \caption{The transformation $T$.}
    \label{TRNAfolding1}
\end{figure}

\begin{figure}[ht]
    \centering
    \includegraphics[scale=1]{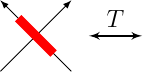}
\hspace{.2cm}
    \includegraphics[scale=.5]{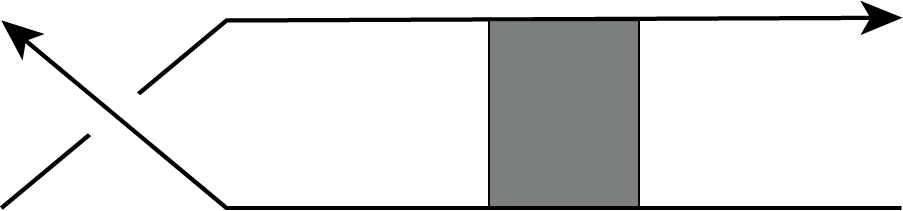}
    \put(10,10){$\sim$}
    \includegraphics[scale=.5]{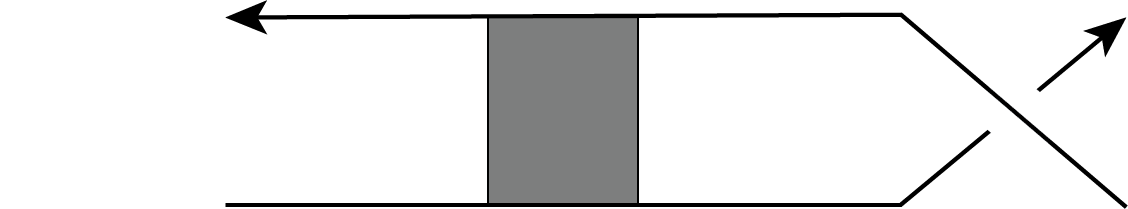}
    \vspace{1.5cm}
    
\includegraphics[scale=1]{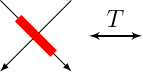}
\hspace{.2cm}    
    \includegraphics[scale=.5]{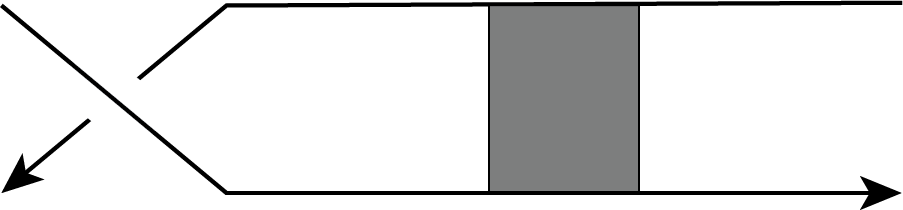}
        \put(10,10){$\sim$}
    \includegraphics[scale=.5]{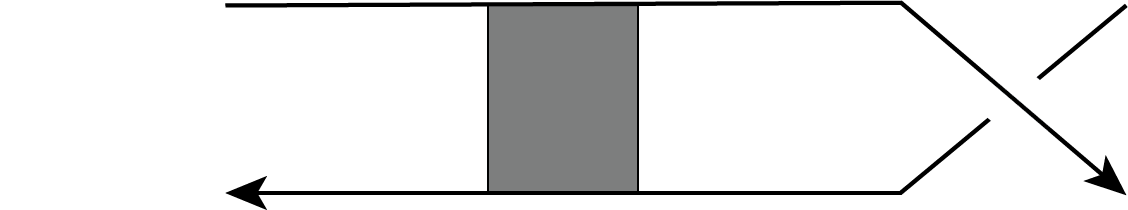}
\caption{Transformation starting with a stuck crossing.}
    \label{TRNAfolding2}
\end{figure}

In the following example, we will consider an arc diagram of an RNA folding and apply the transformation $T$ to obtain a stuck link diagram. Since the strands in an arc diagram can be closed in several ways, we will avoid any ambiguity by following the approach in \cites{B, TLKL} of self-closure. By the self-closure of the arc diagram, we mean that each strand of the arc diagram gets connected to itself. 

\begin{example}\label{RNAFoldingStuck}
In this example, we consider an arc diagram of an RNA folding with two strands, see Figure~\ref{example}. First, since the arc diagram has two strands, the self-closure means that we connect the loose ends of one strand to each other and the loose ends of the other strand to each other. Next,  we replace the gray stripe by applying $T$, see the top figure in Figure~\ref{TRNAfolding1}. 
\begin{figure}
    \centering
\includegraphics[scale=.4]{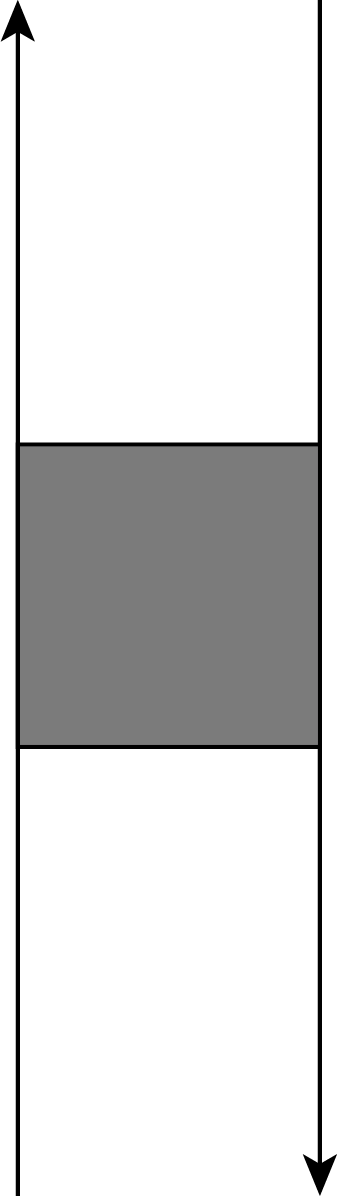}
\hspace{1cm}
\includegraphics[scale=1]{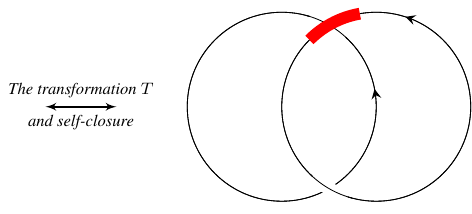}
    \caption{Arc diagram and corresponding stuck link diagram.}
    \label{example}
\end{figure}
\end{example}

The transformation, $T$, will play a key role in allowing us to define an invariant of arc diagrams from the polynomial invariant of stuck links. We also note that in order to study the topology of RNA folding in three-dimensional space through the use of an arc diagram, a set of Reidemeister-type moves was introduced in \cite{KM}. Therefore, an RNA folding is an equivalence class of arc diagrams modulo the Reidemeister-type moves. For more information on the Reidemeister-type moves allowed on an arc diagram and details about arc diagrams, please refer to \cite{KM}.

\section{Algebraic Structures from Stuck Knots}\label{sec4}

 In this section, we discuss the algebraic structures motivated by the diagrammatics of stuck knots and links. For more details on quandles, singquandles, and stuquandles the reader is referred to \cite{EN,BEHY, CEKL}.
 
 The following definition is motivated by the  Reidemeister moves in classical knot theory.  
\begin{definition}
    A \textit{quandle} is a set $X$ with a binary operation $*:X\times X\rightarrow X$ satisfying the following three axioms:
    \begin{itemize}

\item[(i)]({\it right distributivity}) for all $x,y,z\in X$, we have $(x* y)* z=(x* z)* (y* z)$;
\item[(ii)] ({\it invertibility}) for all $x\in X$, the map $R_x:X\rightarrow X$ sending $y$ to $y* x$ is a bijection;
\item[(iii)] ({\it idempotency}) for all $x\in X$, $x* x=x.$
\end{itemize}
If $S\subset X$ is itself a quandle, we call $S$ a \textit{subquandle} of $X$. 
\end{definition}
In the rest of the article, for all $x,y \in X$, we will denote ${R_y}^{-1}(x)$ by $x \overline{*}y$. The next definition is motivated by the generalized Reidemeister moves in singular knot theory.  
\begin{definition}\label{singqdle}
    Let $(X,*)$ be a quandle and $R_1$ and $R_2$ be maps from $X\times X$ to $X$. The quadruple $(X,*,R_1,R_2)$ is an \textit{oriented singquandle} if for all $x,y,z \in X$:
	\begin{eqnarray}
		R_1(x\bar{*}y,z)*y&=&R_1(x,z*y) \label{eq1} \\
		R_2(x\bar{*}y, z) & =&  R_2(x,z*y)\bar{*}y \label{eq2}\\
	      (y\bar{*}R_1(x,z))*x   &=& (y*R_2(x,z))\bar{*}z \label{eq3} \\
R_2(x,y)&=&R_1(y,x*y)  \label{eq4} \\
R_1(x,y)*R_2(x,y)&=&R_2(y,x*y) \label{eq5}.   	
\end{eqnarray}	
\end{definition}

\begin{definition}\label{stuquandle}
	Let $(X, *,R_1,R_2 )$ be a singquandle and $R_3$ and $R_4$ be maps from $X \times X$ to $X$. The 6-tuple $(X, *, R_1, R_2, R_3, R_4)$ is called an \emph{oriented stuquandle} if the following axioms are satisfied for all $x,y,z \in X$:
	\begin{eqnarray}
		R_3(y,x)*R_4(y,x)&=&R_4(x*y,y) \label{eq6},\\
	    R_4(y,x)&=&R_3(x*y,y) \label{eq7},\\
	    R_3(y*x,z)& =&R_3(y,z\bar{*}x)*x  \label{eq8} ,\\
		R_4(y,z\bar{*}x)&=&R_4(y*x,z)\bar{*}x, \label{eq9}\\
	    (x*R_4(y,z))\bar{*}y &=& (x\bar{*}R_3(y,z))*z. \label{eq10}
\end{eqnarray}	
\end{definition}

Let $(X, *, R_1, R_2, R_3, R_4)$ be a stuquandle, and let $L$ be a stuck link with diagram $D$. A coloring of $D$ by $X$ is an assignment of elements of $X$ to the semiarcs at stuck crossings and to the arcs at classical crossings of $D$, obeying the coloring rules in Figure~\ref{newrule}. 

\begin{figure}[ht]
\centering
\vspace{.5cm}
\includegraphics{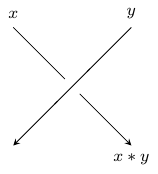}\hspace{2cm}
\includegraphics{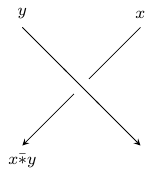}

\vspace{.5cm}
\includegraphics{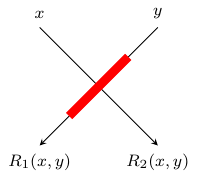}\hspace{2cm}
\includegraphics{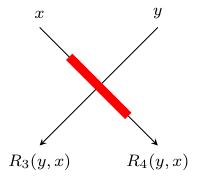}
\caption{Coloring rules at classical and stuck crossings.}
\label{newrule}
\end{figure}
We note that the stuquandle axioms correspond to the oriented stuck Reidemeister moves, following the coloring rules in Figure~\ref{newrule}. We will now review some key concepts about stuquandles, including basic examples.

\begin{definition}
    Let $(X, *, R_1, R_2, R_3, R_4)$ be a stuquandle. A subset $S\subset X$ is called a \textit{substuquandle} if $(S, *, R_1, R_2, R_3, R_4)$ is itself a stuquandle. 
\end{definition}

\begin{definition}\cite{CEKL} \label{homo}
Let $(X,*,R_1,R_2,R_3,R_4)$ and $(Y,\triangleright,S_1,S_2,S_3,S_4)$ be two stuquandles. A map $f:X\longrightarrow Y$ is a \textit{stuquandle homomorphism} if the following conditions are satisfied:
\begin{eqnarray}
f(x*y)&=&f(x)\triangleright f(y),\label{homomorphism1}\\
f(R_1(x,y))&=&S_1(f(x),f(y)),\\
f(R_2(x,y))&=&S_2(f(x),f(y)),\\
f(R_3(x,y))&=&S_3(f(x),f(y)),\\
f(R_4(x,y))&=&S_4(f(x),f(y)).\label{homomorphismlast}
\end{eqnarray}

If $f$ a bijective stuquandle homomorphism then it is called a \textit{stuquandle isomorphism}. 
\end{definition}

\begin{lemma}\label{image}
Let $(X,*,R_1,R_2,R_3,R_4)$ and $(Y,\triangleright,S_1,S_2,S_3,S_4)$ be two stuquandles. If \(f: X \rightarrow Y\) is a stuquandle homomorphism, then the image of \(f\), denoted by \(\text{Im}(f)\), is a substuquandle of \(Y\).
\end{lemma}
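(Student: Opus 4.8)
The plan is to show that $\mathrm{Im}(f)$ is closed under each of the five operations $\triangleright, S_1, S_2, S_3, S_4$, and then observe that closure is the only thing that needs checking, since a subset of a stuquandle that is closed under all the operations inherits all the defining identities automatically (the axioms \eqref{eq1}--\eqref{eq5} and \eqref{eq6}--\eqref{eq10} are universally quantified equations, so they hold on any subset on which the operations restrict).

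First I would verify closure under $\triangleright$. Given $a, b \in \mathrm{Im}(f)$, write $a = f(x)$, $b = f(y)$ for some $x, y \in X$; then by \eqref{homomorphism1} we have $a \triangleright b = f(x) \triangleright f(y) = f(x * y) \in \mathrm{Im}(f)$. The same argument, using \eqref{homomorphism1}--\eqref{homomorphismlast} in turn, gives $S_i(a,b) = S_i(f(x), f(y)) = f(R_i(x,y)) \in \mathrm{Im}(f)$ for $i = 1,2,3,4$. This establishes that $(\mathrm{Im}(f), \triangleright, S_1, S_2, S_3, S_4)$ is a well-defined algebraic system of the same signature as $Y$.

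Next I would check that $\mathrm{Im}(f)$ with the restricted operations is actually a stuquandle, not merely a closed subset. Since $(Y, \triangleright, S_1, S_2, S_3, S_4)$ is a stuquandle, the quandle axioms for $\triangleright$ and the singquandle identities \eqref{eq1}--\eqref{eq5} and stuquandle identities \eqref{eq6}--\eqref{eq10} hold for all elements of $Y$, hence in particular for all elements of $\mathrm{Im}(f) \subseteq Y$; one should also note that invertibility of the right translation $R_a \colon \mathrm{Im}(f) \to \mathrm{Im}(f)$, $b \mapsto b \triangleright a$, holds because this map is the restriction of the corresponding bijection of $Y$ and its inverse $b \mapsto b \,\bar{\triangleright}\, a$ also preserves $\mathrm{Im}(f)$ — indeed if $b = f(y)$ and $a = f(x)$ then $b \,\bar{\triangleright}\, a = f(y) \,\bar{\triangleright}\, f(x) = f(y \,\bar{*}\, x)$, since $f$ is a quandle homomorphism and hence intertwines the inverse operations as well. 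Therefore $(\mathrm{Im}(f), \triangleright, S_1, S_2, S_3, S_4)$ satisfies Definition~\ref{stuquandle}, so it is a substuquandle of $Y$.

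I do not expect any genuine obstacle here; the lemma is essentially bookkeeping. The only mild subtlety worth spelling out is the claim that a quandle homomorphism automatically commutes with $\bar{*}$, which is needed so that $\mathrm{Im}(f)$ is closed under the inverse translations and genuinely forms a quandle; this follows from applying $f$ to the identity $(x \,\bar{*}\, y) * y = x$ and using injectivity of $R_{f(y)}$ on $Y$.
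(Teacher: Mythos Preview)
Your proposal is correct and follows essentially the same approach as the paper's proof: show closure of $\mathrm{Im}(f)$ under $\triangleright, S_1, S_2, S_3, S_4$ via the homomorphism identities, then note that the stuquandle axioms are inherited from $Y$. In fact you are more careful than the paper, which does not explicitly address the invertibility axiom~(ii); your observation that $f$ intertwines $\bar{*}$ with $\bar{\triangleright}$ (so that $\mathrm{Im}(f)$ is closed under the inverse right translations) fills a small gap the paper leaves implicit.
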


\begin{proof}
By definition, the image of \( f \) is \( \text{Im}(f) = \{ f(x) \mid x \in X \} \subseteq Y \).
Since \( f \) is a stuquandle homomorphism, it preserves the stuquandle operations and maps. Specifically, for any \( x, y \in X \) the equations \ref{homomorphism1}-\ref{homomorphismlast} from Definition~\ref{homo} are satisfied.
Therefore, for any \( f(x), f(y) \in \text{Im}(f) \), the elements $ f(x) \triangleright f(y), S_1(f(x), f(y)), S_2(f(x), f(y))$,
\noindent
$ S_3(f(x), f(y)), S_4(f(x), f(y))$ are also in \( \text{Im}(f) \). Hence, \( (\text{Im}(f), \triangleright, S_1, S_2, S_3, S_4) \) satisfies all the stuquandle axioms and is a substuquandle of \( Y \). 
\end{proof}

\begin{example}\label{generalizedaffinestuquandle}\cite{CEKL}
Let $X=\mathbb{Z}_n$ with the quandle operation $x*y = ax+(1-a)y$, where $a$ is invertible so that $x\ \bar{*}\ y = a^{-1}x+(1-a^{-1})y$.  Now let $R_1(x,y) = bx+cy$, then by axiom~(\ref{eq4}) of Definition~\ref{singqdle} we have $R_2(x,y) = acx + [c(1-a) + b]y$. By substituting these expressions into axiom~(\ref{eq1}) of Definition~\ref{singqdle} we find the relation $c= 1 - b$. Substituting, we find that the following is an oriented singquandle for any invertible $a$ and any $b$ in $\mathbb{Z}_n$:
	\begin{eqnarray}
    	x*y &=& ax + (1-a)y, \label{sing1} \\
        R_1(x,y) &=& bx + (1 - b)y, \label{sing2} \\
        R_2(x,y) &=& a(1 - b)x + (1 - a(1 - b))y. \label{sing3}
    \end{eqnarray}
    Next, let $R_3(x,y) = dx+ey$, then by axiom~(\ref{eq7}) of Definition~\ref{stuquandle}, we have $R_4(x,y) =  [d(1-a) + e]x+ady$.  By substituting these expressions into axiom~(\ref{eq6}) of Definition~\ref{stuquandle} we obtain no constraints on the coefficients $d$ and $e$.
    Axiom~(\ref{eq8}) of Definition~\ref{stuquandle} imposes that $d=1-e$, while axioms~(\ref{eq9}) and (\ref{eq10}) of Definition~\ref{stuquandle} introduce no constraint. 
    Substituting, we obtain
\begin{eqnarray}
  R_3(x,y)&=&(1-e)x+ey,\\
  R_4(x,y)&=&(1-a(1-e))x+a(1-e)y.
\end{eqnarray}
 Therefore, $(\mathbb{Z}_n,*,R_1,R_2,R_3,R_4)$ is an oriented stuquandle where $a$ is invertible element of $\mathbb{Z}_n$ and any $b,e \in \mathbb{Z}_n$.

\end{example}

\begin{example}\cite{CEKL}
Let $\Lambda=\mathbb{Z}[t^{\pm 1},v]$  and let $X$ be a $\Lambda$-module. Let
\begin{eqnarray*}
x\ast y &=& tx+(1-t)y,\\
R_1(x,y) &=& \alpha(a,b,c)x+(1-\alpha(a,b,c))y,\\
R_2(x,y) &=& t[1  - \alpha(a,b,c)] x + [1 -t(1-  \alpha(a,b,c))] y,\\
R_3(x,y) &=& (1-\alpha(d,e,f))x+\alpha(d,e,f)y,\\
R_4(x,y) &=&  [1 -t(1-  \alpha(d,e,f))] x + t[1  - \alpha(d,e,f)] y ,\\
\end{eqnarray*}
where $\alpha(a,b,c)=at+bv+ctv$ and $\alpha(d,e,f)=dt+fv+etv$. Then $(X,*,R_1,R_2,R_3,R_4)$ is an oriented stuquandle, which we call an \textit{Alexander oriented stuquandle}. The fact that $(X,*,R_1,R_2,R_3,R_4)$ is an oriented stuquandle follows from Example \ref{generalizedaffinestuquandle} by straightforward substitution. 
\end{example}
Given a stuck link $L$ with diagram $D$, the notion of its fundamental stuquandle \((\mathcal{STQ}(L), \ast, R_1, R_2, R_3, R_4)\) was introduced in \cite{CEKL}. For completeness we include the definition of the fundamental stuquandle of a stuck link.

\begin{definition}\cite{CEKL}
Let $D$ be a stuck link diagram of a stuck link $L$ and let $S=\{ a_1,a_2,\dots, a_m\}$ be the set of labels of the arcs in $D$ at classical crossings and semiarcs in $D$ at stuck crossings. We define the \emph{fundamental stuquandle} of $D$ by proceeding as in classical knot theory with the fundamental quandle.

\begin{enumerate}
    \item The set of stuquandle words, $W(S)$, is recursively defined. 
    \begin{enumerate}
        \item $S \subset W(S)$,
        \item If $a_i, a_j \in W(S)$, then 
    \[ a_i*a_j, a_i \bar{*} a_j, R_1(a_i,a_j), R_2(a_i,a_j), R_3(a_i,a_j), R_4(a_i,a_j) \in W(S). \]
    \end{enumerate}
    \item The set $Y$ is the set of \emph{free stuquandle words} which are equivalent classes of $W(S)$ determined by the conditions in Definition~\ref{stuquandle}.
    \item Let $c_1,\dots, c_n$ be the crossings of $D$. Each crossing $c_i$ in $D$ determines a relation $r_i$ on the elements of $Y$.
    \item The \emph{fundamental stuquandle} of $D$, $\mathcal{STQ}(D)$, is the set of equivalence class of words in $W(S)$ determined by the stuquandle conditions and the relations given by the crossings of $D$.
\end{enumerate}
\end{definition}

In the following example, we use the notation and naming convention of stuck knots and links from \cite{CEKL}. 

\begin{example}
In this example, we compute the fundamental stuquandle of the following oriented stuck link.  Consider the stuck trefoil, denoted by $2_1^{k-}$, with one negative stuck crossing and two negative classical crossings; see Figure~\ref{trefoil}. 
\begin{figure}[h]

\includegraphics{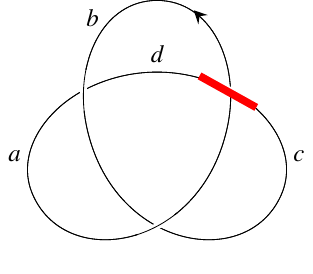}
    \caption{Diagram $D$ of the stuck trefoil $2^{k-}_1$.}
    \label{trefoil}
    
\end{figure}

We will label the arcs of the diagram $D$ by $a,b,c,$ and $d$.  Then the fundamental stuquandle of $2^{k-}_1$ is defined by

\begin{eqnarray*}
\mathcal{STQ}(2^{k-}_1)&\cong&\langle a,b,c,d \, \vert \, \; a=d\, \bar{*}\,b, \; b=R_3(a,c),\; c=b\,\bar{*}\,a,\; d=R_4(a,c)\rangle.
\end{eqnarray*}

\end{example}

The fundamental stuquandle can be used to define the following computable and effective invariant of stuck links. Given a finite stuquandle \((X, \tr, R'_1, R'_2, R'_3, R'_4)\), the set of stuquandle homomorphisms from \(\mathcal{STQ}(L)\) to \(X\), denoted by \(\textup{Hom}(\mathcal{STQ}(L), X)\), is used to define computable invariants. Specifically, by computing the cardinality \(\vert\textup{Hom}(\mathcal{STQ}(L), X) \vert\), we obtain an integer value invariant called the \emph{stuquandle counting invariant}, denoted by \(Col_X(L)\). 

Each \(f \in \textup{Hom}(\mathcal{STQ}(L), X)\) can be thought of as assigning an element of \(X\) to each arc in $D$ at a classical crossing and to each semiarc in $D$ at each stuck crossing, satisfying the coloring rules in Figure~\ref{newrule}. Therefore, each \(f \in \textup{Hom}(\mathcal{STQ}(L), X)\) can be represented by the \(m\)-tuple \( \left(f(a_1), f(a_2), \dots, f(a_m) \right)\), where \(a_1, a_2, \dots, a_m\) are the arc labels of any diagram of $D$. Furthermore, the image of each element of $\textup{Hom}(\mathcal{STQ}(L), X)$ is a substuquandle of $X$ by Lemma~\ref{image}.

\section{Review of the Quandle Polynomial}\label{quandle polynomial}
In this section, we recall the definition of the quandle polynomial, the subquandle polynomial, and the link invariants obtained from the subquandle polynomial. For a detailed construction of these polynomials, see \cites{EN,N}.
\begin{definition}
Let  $(Q,*)$ be a finite quandle. For any element $x \in Q$,  let
\[ C(x) = \lbrace y \in Q \, : \, y *x = y \rbrace \quad \text{and} \quad R(x) = \lbrace y \in Q \, : \, x *y = x \rbrace  \]
and set $r(x) = \vert R(x) \vert$ and $c(x) = \vert C(x) \vert$. Then the \emph{quandle polynomial of Q}, $qp_Q(s,t)$, is 
\[ qp_Q(s,t) = \sum_{x \in Q} s^{r(x)}t^{c(x)}.\]
\end{definition}

In \cite{N}, the quandle polynomial was shown to be an effective invariant of finite quandles. In addition to being an invariant of finite quandles, the quandle polynomial was generalized to give information about how a subquandle is embedded in a quandle.

\begin{definition}
Let $S \subset Q$ be a subquandle of $Q$. The \emph{subquandle polynomial of S}, $qp_{S \subset Q}(s,t)$, is
\[qp_{S \subset Q}(s,t) = \sum_{x \in S} s^{r(x)}t^{c(x)}\]
where $r(x)$ and $c(x)$ are defined above.
\end{definition}
Note that for any knot or link $K$, there is an associated fundamental quandle, $Q(K)$, and for any given finite quandle $T$ the set of quandle homomorphism, denoted by $\textup{Hom}(Q(K),T)$, has been used to define computable link invariants, for example, the cardinality of the set is known as the \emph{quandle counting invariant}. In \cite{N}, the subquandle polynomial of the image of each homomorphism was used to enhance the counting invariant.

\begin{definition}
Let $K$ be a link and $T$ be a finite quandle. Then for every $f \in \textup{Hom}(Q(K),T)$, the image of $f$ is a subquandle of $T$. The \emph{subquandle polynomial invariant}, $\Phi_{qp}(K,T)$, is the set with multiplicities
\[ \Phi_{qp}(K,T) = \lbrace qp_{\textup{Im}(f) \subset T} (s,t) \, \vert \, f \in \textup{Hom}(Q(K),T) \rbrace.\]
Alternatively, the multiset can be represented in polynomial form by
\[  \phi_{qp}(K,T) = \sum_{f \in \textup{Hom}(Q(K),T)} u^{qp_{\textup{Im}(f) \subset T} (s,t)}.\]
\end{definition}

\section{Generalized Quandle Polynomials}\label{sec6}

In this section, we introduce a generalization of the quandle polynomial in \cite{N}. We show that this generalization is an invariant of stuquandles. We then use the generalization to define a polynomial invariant of stuck links.  

\begin{definition}
    Let $(X,*,R_1,R_2,R_3,R_4)$ be a finite stuquandle. For every $x\in X$, define \begin{align*}
        C^1(x)&=\{y\in X|y*x=y\} \hspace{.3cm}\text{ and }\hspace{.3cm} R^1(x)=\{y\in X|x*y=x\},\\
        C^2(x)&=\{y\in X|R_1(y,x)=y\} \hspace{.3cm}\text{ and }\hspace{.3cm} R^2(x)=\{y\in X|R_1(x,y)=x\},\\
        C^3(x)&=\{y\in X|R_2(y,x)=y\} \hspace{.3cm}\text{ and }\hspace{.3cm} R^3(x)=\{y\in X|R_2(x,y)=x\},\\
        C^4(x)&=\{y\in X|R_3(y,x)=y\} \hspace{.3cm}\text{ and }\hspace{.3cm} R^4(x)=\{y\in X|R_3(x,y)=x\},\\
        C^5(x)&=\{y\in X|R_4(y,x)=y\} \hspace{.3cm}\text{ and }\hspace{.3cm} R^5(x)=\{y\in X|R_4(x,y)=x\}.\\
    \end{align*}
   Let $c^i(x)=|C^i(x)|$ and $r^i(x)=|R^i(x)|$ for $i=1,2,3,4,5$. Then the \textit{stuquandle polynomial of $X$} is 
   \[ stqp(X)=\sum_{x\in X} s_1^{r^1(x)}t_1^{c^1(x)}s_2^{r^2(x)}t_2^{c^2(x)}s_3^{r^3(x)}t_3^{c^3(x)}s_4^{r^4(x)}t_4^{c^4(x)}s_5^{r^5(x)}t_5^{c^5(x)}.\] 
\end{definition}

\begin{proposition} \label{iso}
    If $(X,*,R_1,R_2,R_3,R_4)$ and $(Y,\triangleright,R'_1,R'_2,R'_3,R'_4)$ are isomorphic finite stuquandles, then $stqp(X)=stqp(Y)$.
\end{proposition}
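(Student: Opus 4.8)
The plan is to show that a stuquandle isomorphism $f\colon X \to Y$ induces, for each $i \in \{1,2,3,4,5\}$ and each $x \in X$, a bijection between the set $C^i(x)$ (resp. $R^i(x)$) computed in $X$ and the set $C^i(f(x))$ (resp. $R^i(f(x))$) computed in $Y$. Once this is established, we have $c^i(x) = c^i(f(x))$ and $r^i(x) = r^i(f(x))$ for all $x$, so the monomial contributed by $x$ to $stqp(X)$ equals the monomial contributed by $f(x)$ to $stqp(Y)$. Since $f$ is a bijection on underlying sets, summing over $x \in X$ matches summing over $f(x) \in Y$, giving $stqp(X) = stqp(Y)$.

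For the key step, I would argue as follows. Fix $x \in X$ and consider, say, $C^2(x) = \{ y \in X \mid R_1(y,x) = y \}$. Using that $f$ is a homomorphism and Definition~\ref{homo}, for any $y \in X$ we have $f(R_1(y,x)) = R'_1(f(y), f(x))$, so if $y \in C^2(x)$ then $R'_1(f(y),f(x)) = f(R_1(y,x)) = f(y)$, i.e.\ $f(y) \in C^2(f(x))$. Thus $f$ restricts to a map $C^2(x) \to C^2(f(x))$; it is injective because $f$ is. For surjectivity, take $z \in C^2(f(x))$, so $R'_1(z,f(x)) = z$; since $f$ is surjective write $z = f(y)$, and then $f(R_1(y,x)) = R'_1(f(y),f(x)) = z = f(y)$, so by injectivity of $f$ we get $R_1(y,x) = y$, i.e.\ $y \in C^2(x)$ and $z = f(y)$ lies in the image. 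Hence $f$ gives a bijection $C^2(x) \cong C^2(f(x))$, and $c^2(x) = c^2(f(x))$. The identical argument, using the appropriate defining equation from Definition~\ref{homo} ($f(x*y) = f(x) \triangleright f(y)$ for $i=1$, and the relations for $R_1,R_2,R_3,R_4$ for $i=2,3,4,5$), handles every $C^i$ and every $R^i$ — the only change is which operation appears on the left-hand side and whether $x$ sits in the first or second argument.

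I do not expect a genuine obstacle here: the proof is a routine but careful "transport of structure" argument, and the main thing to get right is simply enumerating all ten families of sets ($C^i$ and $R^i$ for $i = 1,\dots,5$) and citing the matching homomorphism equation for each, rather than any subtle inequality. One could shorten the exposition by proving a single lemma: if $g\colon A^n \to A$ and $g'\colon B^n \to B$ are operations with $f(g(\mathbf{a})) = g'(f(\mathbf{a}))$ for a bijection $f\colon A \to B$, then $f$ sends $\{a \in A \mid g(\dots,a,\dots) = a\}$ bijectively onto the analogous set in $B$, and then apply it ten times. I would likely just state the bijection explicitly for one representative case and remark that the others are identical.
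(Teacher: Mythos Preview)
Your proposal is correct and follows essentially the same approach as the paper: both fix $x$, use the homomorphism equations to show that $f$ carries each $C^i(x)$ (resp.\ $R^i(x)$) to $C^i(f(x))$ (resp.\ $R^i(f(x))$), obtain equality of the cardinalities, and then reindex the sum via the bijection $f$. The only cosmetic difference is that the paper gets $|C^i(x)|=|C^i(f(x))|$ by proving one inequality and then applying the same reasoning to $f^{-1}$, whereas you exhibit the bijection directly using injectivity and surjectivity of $f$.
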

\begin{proof}
    Suppose $f:X\rightarrow Y$ is a stuquandle isomorphism and fix $x\in X$. For all $y\in C^1(x)=\{y\in X| \; y*x=y\}$, we have $f(y)\triangleright f(x)=f(y*x)=f(y)$, thus $f(y)\in C^1(f(x))$ and $|C^1(x)|\leq |C^1(f(x))|$. Applying this argument to $f^{-1}$, we obtain $|C^1(f(x))|\leq |C^1(x)|$, and therefore, $c^1(x)=c^1(f(x))$. By definition of a stuquandle isomorphism we have, $R'_j(f(y),f(x))=f(R_j(y,x))=f(y)$ for $j=1,2,3,4$. By applying a similar argument used to show $c^1(x)=c^1(f(x))$, we obtain $c^i(x)=c^i(f(x))$ for $i=2,3,4,5$. A similar argument also shows that $r^i(x)=r^i(f(x))$ for $i=1,2,3,4,5$. These facts give the following:\begin{align*}
        stqp(X)&=\sum_{x\in X} s_1^{r^1(x)}t_1^{c^1(x)}s_2^{r^2(x)}t_2^{c^2(x)}s_3^{r^3(x)}t_3^{c^3(x)}s_4^{r^4(x)}t_4^{c^4(x)}s_5^{r^5(x)}t_5^{c^5(x)}\\
        &=\sum_{f(x)\in Y} s_1^{r^1(x)}t_1^{c^1(x)}s_2^{r^2(x)}t_2^{c^2(x)}s_3^{r^3(x)}t_3^{c^3(x)}s_4^{r^4(x)}t_4^{c^4(x)}s_5^{r^5(x)}t_5^{c^5(x)}\\
        &=\sum_{f(x)\in Y} s_1^{r^1(f(x))}t_1^{c^1(f(x))}s_2^{r^2(f(x))}t_2^{c^2(f(x))}s_3^{r^3(f(x))}t_3^{c^3(f(x))}s_4^{r^4(f(x))}t_4^{c^4(f(x))}s_5^{r^5(f(x))}t_5^{c^5(f(x))}\\
        &=stqp(Y).
    \end{align*}
\end{proof}

\begin{definition}
    Let $X$ be a finite stuquandle and $S\subset X$ be a substuquandle. Then the \textit{substuquandle polynomial} is 
    \[ Sstqp(S\subset X)= \sum_{x\in S} s_1^{r^1(x)}t_1^{c^1(x)}s_2^{r^2(x)}t_2^{c^2(x)}s_3^{r^3(x)}t_3^{c^3(x)}s_4^{r^4(x)}t_4^{c^4(x)}s_5^{r^5(x)}t_5^{c^5(x)}.\]
\end{definition}

Note that for $i \in \lbrace 1,2,3,4,5 \rbrace$, $r^i(x)$ (respectively $c^i(x)$) is the number of elements of $X$ that act trivially on $x$ (respectively, is the number of elements of $X$ on which $x$ acts trivially) via $*, R_1, R_2, R_3$, and $R_4$. These values can be easily computed from the operation table of $*, R_1, R_2, R_3$, and $R_4$ by counting the occurrences of the row numbers. Please refer to Example~\ref{Poly} for further explanation.
\begin{example}\label{Poly}
Let $X_1 =\mathbb{Z}_4$ be the stuquandle with operations $x*y = 3x+2y $, $R_1(x,y)=2x+3y$, $R_2(x,y) = x$, $R_3(x,y) = 3x+2y$, and $R_4(x,y) = y$. These operations have the following operation tables,
\[
\begin{tabular}{ r|  c  c  c c }
* & 0 &1 &2 & 3\\
  \hline			
0& 0 & 2 & 0 & 2 \\
1& 3 & 1 & 3 & 1 \\
2& 2 & 0 & 2 & 0 \\
3& 1 & 3 & 1 & 3
\end{tabular}
\qquad
\begin{tabular}{ r|  c  c  c c }
$R_1$ & 0 &1 &2 & 3\\
  \hline			
0& 0 & 3 & 2 & 1 \\
1& 2 & 1 & 0 & 3 \\
2& 0 & 3 & 2 & 1 \\
3& 2 & 1 & 0 & 3 \\
\end{tabular}
\qquad
\begin{tabular}{ r|  c  c  c c }
$R_2$ & 0 &1 &2 & 3\\
  \hline			
0& 0 & 0 & 0 & 0 \\
1& 1 & 1 & 1 & 1 \\
2& 2 & 2 & 2 & 2 \\
3& 3 & 3 & 3 & 3 \\
\end{tabular}
\]
\[
\begin{tabular}{ r|  c  c  c c }
$R_3$ & 0 &1 &2 & 3\\
  \hline			
0& 0 & 2 & 0 & 2 \\
1& 3 & 1 & 3 & 1 \\
2& 2 & 0 & 2 & 0 \\
3& 1 & 3 & 1 & 3
\end{tabular}
\qquad
\begin{tabular}{ r|  c  c  c c }
$R_4$ & 0 &1 &2 & 3\\
  \hline			
0& 0 & 1 & 2 & 3 \\
1& 0 & 1 & 2 & 3 \\
2& 0 & 1 & 2 & 3 \\
3& 0 & 1 & 2 & 3 \\
\end{tabular}
\]
and the operations have the following $r^i(x)$ and $c^i(x)$ values for $i= 1,2,3,4,5$:
\[
\begin{tabular}{ r|  c  c  }
$x$ & $r^1(x)$ & $c^1(x)$\\
  \hline			
1&  2 & 2 \\
2&  2 & 2  \\
3&  2 & 2 \\
0&  2 & 2  \\
\end{tabular}
\qquad
\begin{tabular}{ r|  c  c  }
$x$ & $r^2(x)$ & $c^2(x)$\\
  \hline			
1&  1 & 1 \\
2&  1 & 1  \\
3&  1 & 1 \\
0&  1 & 1  \\
\end{tabular}
\qquad
\begin{tabular}{ r|  c  c  }
$x$ & $r^3(x)$ & $c^3(x)$\\
  \hline			
1&  4& 4 \\
2&  4 & 4  \\
3&  4 & 4 \\
0&  4 & 4  \\
\end{tabular}
\]
\[
\begin{tabular}{ r|  c  c  }
$x$ & $r^4(x)$ & $c^4(x)$\\
  \hline			
1&  2 & 2 \\
2&  2 & 2  \\
3&  2 & 2 \\
0&  2 & 2  \\
\end{tabular}
\qquad
\begin{tabular}{ r|  c  c  }
$x$ & $r^5(x)$ & $c^5(x)$\\
  \hline			
1&  1 & 1 \\
2&  1 & 1  \\
3&  1 & 1 \\
0&  1 & 1  \\
\end{tabular}.
\]
Thus, the stuquandle polynomial of $X$ is 
\[ sqp(X_1) = 4 s_1^2 t_1^2 s_2 t_2 s_3^4 t_3^4 s_4^2 t_4^2 s_5 t_5. \]

Next, consider the stuquandle $X_2 = \mathbb{Z}_4$ with operations $x*y=x$, $R_1(x,y)=y$, $R_2(x,y)=x$, $R_3(x,y)=y$ and $R_4(x,y)=x$. The operations have the following operation tables
\[
\begin{tabular}{ r|  c  c  c c }
* & 0 &1 &2 & 3\\
  \hline			
0& 0 & 0 & 0 & 0 \\
1& 1 & 1 & 1 & 1 \\
2& 2 & 2 & 2 & 2 \\
3& 3 & 3 & 3 & 3 \\
\end{tabular}
\qquad
\begin{tabular}{ r|  c  c  c c }
$R_1$ & 0 &1 &2 & 3\\
  \hline			
0& 0 & 1 & 2 & 3 \\
1& 0 & 1 & 2 & 3 \\
2& 0 & 1 & 2 & 3 \\
3& 0 & 1 & 2 & 3 \\
\end{tabular}
\qquad
\begin{tabular}{ r|  c  c  c c }
$R_2$ & 0 &1 &2 & 3\\
  \hline			
0& 0 & 0 & 0 & 0 \\
1& 1 & 1 & 1 & 1 \\
2& 2 & 2 & 2 & 2 \\
3& 3 & 3 & 3 & 3 \\
\end{tabular}
\]
\[
\begin{tabular}{ r|  c  c  c c }
$R_3$ & 0 &1 &2 & 3\\
  \hline			
0& 0 & 1 & 2 & 3 \\
1& 0 & 1 & 2 & 3 \\
2& 0 & 1 & 2 & 3 \\
3& 0 & 1 & 2 & 3 \\
\end{tabular}
\qquad
\begin{tabular}{ r|  c  c  c c }
$R_4$ & 0 &1 &2 & 3\\
  \hline			
0& 0 & 0 & 0 & 0 \\
1& 1 & 1 & 1 & 1 \\
2& 2 & 2 & 2 & 2 \\
3& 3 & 3 & 3 & 3 \\
\end{tabular}
\]
and the operations have the following $r^i(x)$ and $c^i(x)$ values for $i= 1,2,3,4,5$:
\[
\begin{tabular}{ r|  c  c  }
$x$ & $r^1(x)$ & $c^1(x)$\\
  \hline			
1&  4 & 4 \\
2&  4 & 4  \\
3&  4 & 4 \\
0&  4 & 4  \\
\end{tabular}
\qquad
\begin{tabular}{ r|  c  c  }
$x$ & $r^2(x)$ & $c^2(x)$\\
  \hline			
1&  1 & 1 \\
2&  1 & 1  \\
3&  1 & 1 \\
0&  1 & 1  \\
\end{tabular}
\qquad
\begin{tabular}{ r|  c  c  }
$x$ & $r^3(x)$ & $c^3(x)$\\
  \hline			
1&  4& 4 \\
2&  4 & 4  \\
3&  4 & 4 \\
0&  4 & 4  \\
\end{tabular}
\]
\[
\begin{tabular}{ r|  c  c  }
$x$ & $r^4(x)$ & $c^4(x)$\\
  \hline			
1&  1 & 1 \\
2&  1 & 1  \\
3&  1 & 1 \\
0&  1 & 1  \\
\end{tabular}
\qquad
\begin{tabular}{ r|  c  c  }
$x$ & $r^5(x)$ & $c^5(x)$\\
  \hline			
1&  4 & 4 \\
2&  4 & 4  \\
3&  4 & 4 \\
0&  4 & 4  \\
\end{tabular}.
\]
Thus, the stuquandle polynomial of $X$ is 
\[ sqp(X_2) = 4 s_1^4 t_1^4 s_2 t_2 s_3^4 t_3^4 s_4^1 t_4^1 s_5^4 t_5^4. \]

We obtain that $sqp(X_1) \neq sqp(X_2)$. Therefore, the two stuquandle structures defined on $\mathbb{Z}_4$ are distinguished by the contrapositive of Proposition~\ref{iso}. 
\end{example}

\begin{example}
    Let $S=\{1,3\}$ be a substuquandle of $X$ from the previous example. Thus, the substuquandle polynomial of $S$ is 
    \[ Sstqp(S\subset X)= 2 s_1^2 t_1^2 s_2 t_2 s_3^4 t_3^4 s_4^2 t_4^2 s_5 t_5.\]
\end{example}

By Lemma~\ref{image}, we know that the image of a stuquandle homomorphism is a substuquandle. Suppose that \(X\) is a stuquandle and \(L\) is a stuck link. For each \(f \in \textup{Hom}(\mathcal{STQ}(L), X)\), the image \(\text{Im}(f)\) is a substuquandle of \(X\). This allows us to define the following polynomial.

\begin{definition}
    Let $L$ be a stuck link, $T$ be a finite stuquandle. Then the multiset 
    \[ \Phi _{Sstqp}(L,T)=\{Sstqp(Im(f)\subset T)|f\in Hom(\mathcal{STQ}(L),T)\}\]
    is the \textit{substuquandle polynomial invariant of $L$} with respect to $T$. We can rewrite the multiset in a polynomial-style form by converting the multiset elements to exponents of a formal variable $u$ and converting their multiplicities to coefficients:
\[ \phi_{Sstqp}(L,T) = \sum_{f \in \textup{Hom}(\mathcal{STQ}(L),T)} u^{Sstqp(Im(f) \subset T)}.\]
\end{definition}

\section{Examples}\label{Examples}

In this section, we present examples that demonstrate the effectiveness of the substuquandle polynomial invariant in distinguishing stuck links. Specifically, we include an example that illustrates how the substuquandle polynomial invariant enhances the stuquandle counting invariant. Additionally, we include an example of two stuck links that are not distinguished by the $\hat{X}$ polynomial, but can be distinguished by the substuquandle polynomial. Finally, we explicitly compute the substuquandle polynomial of two RNA foldings and differentiate them using the substuquandle polynomial.

\begin{example}\label{Example71}
     Consider coloring the stuck knots $2_1^{k-}$ and $0_1^{k+}$ by the stuquandle $X=\mathbb{Z}_{4}$ with operation defined by $x*y=3x+2y$ and maps $R_1(x,y)=x+2y^2$, $R_2(x,y)=2x^2+y$, $R_3(x,y)=3x$, and $R_4(x,y)=2x+y$. It was found in \cite{CEMR} that the stuquandle counting invariant for both of these knots is equal to 4. \begin{figure}[h]
    \begin{minipage}{0.48\textwidth}
    \centering
\includegraphics{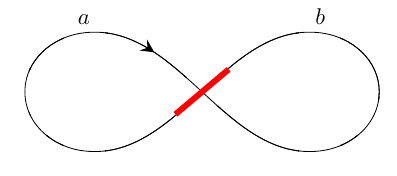}
\caption{ The stuck knot infinity $0^{k+}_1$}
\label{stuckinfinity}
    \end{minipage}
    \begin{minipage}{0.48\textwidth}
    \centering
\includegraphics{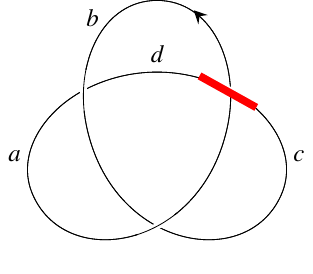}
    \caption{ The stuck trefoil $2^{k-}_1$}
    \label{stucktrefoil}
    \end{minipage}
\end{figure}
Given the stuquandle defined above, the colorings for Figure \ref{stuckinfinity} are $\textup{Hom}(\mathcal{STQ}(0_1^{k+}), X)  = \lbrace(0,0),(0,2),(2,0),(2,2) \rbrace$, and the colorings for Figure \ref{stucktrefoil} are $\textup{Hom}(\mathcal{STQ}(2_1^{k-}), X)  = \lbrace(0, 0, 0, 0)$, $(1, 3, 3, 1), (2, 2, 2, 2), (3, 1, 1, 3) \rbrace$. Thus, the coloring invariant for both knots is $4$. To calculate the substuquandle polynomial invariant of these stuck knots, we show in the tables below the operations have the following $r^i(x)$ and $c^i(x)$ values for $i= 1,2,3,4,5$:
\[
\begin{tabular}{ r|  c  c  }
$x$ & $r^1(x)$ & $c^1(x)$\\
  \hline
0&  2 & 2  \\
1&  2 & 2 \\
2&  2 & 2  \\
3&  2 & 2 \\
\end{tabular}
\qquad
\begin{tabular}{ r|  c  c  }
$x$ & $r^2(x)$ & $c^2(x)$\\
  \hline		
0&  2 & 4  \\
1&  2 & 0 \\
2&  2 & 4  \\
3&  2 & 0 \\
\end{tabular}
\qquad
\begin{tabular}{ r|  c  c  }
$x$ & $r^3(x)$ & $c^3(x)$\\
  \hline	
0&  1 & 1  \\
1&  1& 1 \\
2&  1 & 1  \\
3&  1 & 1 \\
\end{tabular}
\]
\[
\begin{tabular}{ r|  c  c  }
$x$ & $r^4(x)$ & $c^4(x)$\\
  \hline	
0&  4 & 2  \\
1&  0 & 2 \\
2&  4 & 2  \\
3&  0 & 2 \\
\end{tabular}
\qquad
\begin{tabular}{ r|  c  c  }
$x$ & $r^5(x)$ & $c^5(x)$\\
  \hline	
0&  1 & 1  \\
1&  1 & 1 \\
2&  1 & 1  \\
3&  1 & 1 \\
\end{tabular}.
\]
We collect the colorings of each stuck link and the substuquandle polynomial of the image of each coloring in Table~\ref{poly1} and Table~\ref{poly2}. 
\begin{table}[h]
{\begin{tabular}{cc|c|c}
$f(a)$ & $f(b)$ &   $Im(f) \subset X$ &$Sstqp(Im(f)\subset X)$ \\
\hline
 $0$& $0$ &  $\{ 0 \}$ & $s_1^2 t_1^2 s_2^2 t_2^4 s_3 t_3 s_4^4 t_4^2 s_5 t_5$\\
 $0$& $2$ &  $\{ 0,2 \}$ & $2 s_1^2 t_1^2 s_2^2 t_2^4 s_3 t_3 s_4^4 t_4^2 s_5 t_5$ \\
  $2$& $0$ &  $\{ 0,2 \}$ & $2 s_1^2 t_1^2 s_2^2 t_2^4 s_3 t_3 s_4^4 t_4^2 s_5 t_5$\\
   $2$& $2$ &  $\{ 2 \}$ & $s_1^2 t_1^2 s_2^2 t_2^4 s_3 t_3 s_4^4 t_4^2 s_5 t_5$\\
\end{tabular}}
\caption{Colorings and substuquandle polynomial of each coloring of $0^{k+}_1$.}
\label{poly1}
\end{table}
\begin{table}[h]
{\begin{tabular}{cccc|c|c}
$f(a)$ & $f(b)$ & $f(c)$ & $f(d)$ &  $Im(f) \subset X$ &$Sstqp(Im(f)\subset X)$ \\
\hline
 $0$& $0$ & $0$& $0$ &  $\{ 0 \}$ &$s_1^2 t_1^2 s_2^2 t_2^4 s_3 t_3 s_4^4 t_4^2 s_5 t_5$\\
 $1$& $3$ & $3$ &  $1$ &  $\{ 1,3 \}$ &$2s_1^2 t_1^2 s_2^2 s_3 t_3  t_4^2 s_5 t_5$\\
  $2$& $2$ & $2$& $2$ &  $\{ 2 \}$ &$s_1^2 t_1^2 s_2^2 t_2^4 s_3 t_3 s_4^4 t_4^2 s_5 t_5$\\
   $3$& $1$ & $1$ & $3$ &  $\{ 1,3 \}$ &$2s_1^2 t_1^2 s_2^2 s_3 t_3  t_4^2 s_5 t_5$\\
\end{tabular}}
\caption{Colorings and substuquandle polynomial of each coloring of $2^{k-}_1$.}
\label{poly2}
\end{table}

Using the coloring set of each stuck knot, we obtain the following substuquandle polynomial invariants
\[ \phi _{Sstqp}(0_1^{k+},X)=2u^{2 s_1^2 t_1^2 s_2^2 t_2^4 s_3 t_3 s_4^4 t_4^2 s_5 t_5}+2u^{ s_1^2 t_1^2 s_2^2 t_2^4 s_3 t_3 s_4^4 t_4^2 s_5 t_5}
\]
and

\[\phi _{Sstqp}(2_1^{k-},X)=2u^{ s_1^2 t_1^2 s_2^2 t_2^4 s_3 t_3 s_4^4 t_4^2 s_5 t_5}+2u^{2s_1^2 t_1^2 s_2^2 s_3 t_3  t_4^2 s_5 t_5}.\]

\begin{figure}[h]
    \begin{minipage}{0.48\textwidth}
    \centering
\includegraphics[scale=1]{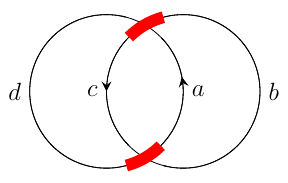}
\caption{Diagram of stuck link $K_1$.}
\label{link1}
    \end{minipage}
    \begin{minipage}{0.48\textwidth}
    \centering
    \includegraphics[scale=1]{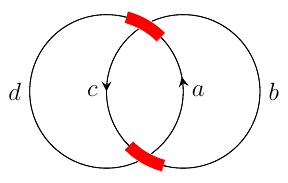}
    \caption{Diagram of stuck link $K_2$.}
    \label{link}
    \end{minipage}
\end{figure}
\end{example}

\begin{example}
 Consider coloring the stuck knots $K_1$ and $K_2$ in Figure~\ref{link1} and \ref{link}, respectively, using the stuquandle $X=\mathbb{Z}_{3}$ with operations $x*y = x$, $R_1(x,y)= 2y^2$,$R_2(x,y)=2x^2$, $R_3(x,y) = 2x+2x^2$, and $R_4(x,y)=2y+2y^2$. The operation tables are given by

 \[
\begin{tabular}{ r|  c  c   c }
* & 0 &1 & 2\\
  \hline			
0& 0 & 0 & 0  \\
1& 1 & 1 & 1    \\
2& 2 & 2 & 2  \\
\end{tabular}
\qquad
\begin{tabular}{ r|  c  c   c }
$R_1$ & 0 &1  & 2\\
  \hline			
0&0 & 2 & 2  \\
1& 0 & 2 & 2  \\
2& 0 & 2 & 2  \\
\end{tabular}
\qquad
\begin{tabular}{ r|  c  c  c  }
$R_2$ & 0 &1 & 2\\
  \hline			
 0&0 & 0 & 0  \\
 1&2 & 2 & 2  \\
 2&2 & 2 & 2  \\
\end{tabular}
\]
\[
\begin{tabular}{ r|  c  c   c }
$R_3$ & 0 &1  & 2\\
  \hline			
 0&0 & 0 & 0  \\
 1&1 & 1 & 1 \\
 2&0 & 0 & 0  \\
\end{tabular}
\qquad
\begin{tabular}{ r|  c  c   c }
$R_4$ & 0 &1  & 2\\
  \hline			
 0&0 & 1 & 0  \\
 1&0 & 1 & 0  \\
 2&0 & 1 & 0  \\
\end{tabular}
\]
 The $\hat{X}$ polynomial was defined in \cite{B} for stuck knots and it was shown that $\hat{X}(K_1) = 
 2xy-(x^2+y^2)(A^2+A{-2}) = \hat{X}(K_2)$. Also note 
 that $\textup{Hom}(\mathcal{STQ}(K_1), X)= \lbrace(0,0,0,0), 
 (0,1,0,1), (1,0,1,0), (1,1,1,1) \rbrace $ and 
 $\textup{Hom}(\mathcal{STQ}(K_1), X)= \lbrace (0,0,0,0), 
 (0,2,0,2), (2,0,2,0), (2,2,2,2) \rbrace $. Thus, the 
 coloring invariant for both is $4$. To calculate the substuquandle polynomial invariant of these stuck knots, we show in the tables below the operations have the following $r^i(x)$ and $c^i(x)$ values for $i= 1,2,3,4,5$:
\[
\begin{tabular}{ r|  c  c  }
$x$ & $r^1(x)$ & $c^1(x)$\\
  \hline
0&  3 & 3  \\
1&  3 & 3 \\
2&  3 & 3  \\

\end{tabular}
\qquad
\begin{tabular}{ r|  c  c  }
$x$ & $r^2(x)$ & $c^2(x)$\\
  \hline		
0&  1 & 1  \\
1&  0 & 1 \\
2&  2 & 1  \\

\end{tabular}
\qquad
\begin{tabular}{ r|  c  c  }
$x$ & $r^3(x)$ & $c^3(x)$\\
  \hline	
0&  3 & 2  \\
1&  0& 2 \\
2&  3 & 2  \\

\end{tabular}
\]
\[
\begin{tabular}{ r|  c  c  }
$x$ & $r^4(x)$ & $c^4(x)$\\
  \hline	
0&  3 & 2  \\
1&  3 & 2 \\
2&  0 & 2  \\

\end{tabular}
\qquad
\begin{tabular}{ r|  c  c  }
$x$ & $r^5(x)$ & $c^5(x)$\\
  \hline	
0&  2 & 1  \\
1&  1 & 1 \\
2&  0 & 1  \\

\end{tabular}.
\]
We collect the colorings of each stuck link and the substuquandle polynomial of the image of each coloring in Table~\ref{poly3} and Table~\ref{poly4}. 
\begin{table}[h]
{\begin{tabular}{cccc|c|c}
$f(a)$ & $f(b)$ & $f(c)$ & $f(d)$ &  $Im(f) \subset X$ &$Sstqp(Im(f)\subset X)$ \\
\hline
 $0$& $0$ & $0$& $0$ &  $\{ 0 \}$ &$s_1^3 t_1^3 s_2 t_2 s_3^3 t_3^2 s_4^3 t_4^2 s_5^2 t_5$\\
 $1$& $0$ & $1$ &  $0$ &  $\{ 0,1,2 \}$ &$s_1^3 t_1^3 s_2 t_2 s_3^3 t_3^2 s_4^3 t_4^2 s_5^2 t_5+s_1^3 t_1^3 t_2 t_3^2 s_4^3 t_4^2 s_5 t_5+s_1^3 t_1^3 s_2 t_2 s_3^3 t_3^2 t_4^2 t_5$\\
  $0$& $1$ & $0$& $1$ &  $\{ 0,1,2 \}$ &$s_1^3 t_1^3 s_2 t_2 s_3^3 t_3^2 s_4^3 t_4^2 s_5^2 t_5+s_1^3 t_1^3 t_2 t_3^2 s_4^3 t_4^2 s_5 t_5+s_1^3 t_1^3 s_2 t_2 s_3^3 t_3^2 t_4^2 t_5$\\
   $1$& $1$ & $1$ & $1$ &  $\{ 0,1,2 \}$ &$s_1^3 t_1^3 s_2 t_2 s_3^3 t_3^2 s_4^3 t_4^2 s_5^2 t_5+s_1^3 t_1^3 t_2 t_3^2 s_4^3 t_4^2 s_5 t_5+s_1^3 t_1^3 s_2 t_2 s_3^3 t_3^2 t_4^2 t_5$\\
\end{tabular}}
\caption{Colorings and substuquandle polynomial of each coloring of $K_1$.}
\label{poly3}
\end{table}
\begin{table}[h]
{\begin{tabular}{cccc|c|c}
$f(a)$ & $f(b)$ & $f(c)$ & $f(d)$ &  $Im(f) \subset X$ &$Sstqp(Im(f)\subset X)$ \\
\hline
 $0$& $0$ & $0$& $0$ &  $\{ 0 \}$ &$s_1^3 t_1^3 s_2 t_2 s_3^3 t_3^2 s_4^3 t_4^2 s_5^2 t_5$\\
 $0$& $2$ & $0$ &  $2$ &  $\{ 0,2 \}$ &$s_1^3 t_1^3 s_2 t_2 s_3^3 t_3^2 s_4^3 t_4^2 s_5^2 t_5+s_1^3 t_1^3 s_2 t_2 s_3^3 t_3^2 t_4^2 t_5 $\\
  $2$& $0$ & $2$& $0$ &  $\{ 0,2 \}$ &$s_1^3 t_1^3 s_2 t_2 s_3^3 t_3^2 s_4^3 t_4^2 s_5^2 t_5+s_1^3 t_1^3 s_2 t_2 s_3^3 t_3^2 t_4^2 t_5 $\\
   $2$& $2$ & $2$ & $2$ &  $\{ 0,2 \}$ &$s_1^3 t_1^3 s_2 t_2 s_3^3 t_3^2 s_4^3 t_4^2 s_5^2 t_5+s_1^3 t_1^3 s_2 t_2 s_3^3 t_3^2 t_4^2 t_5 $\\
\end{tabular}}
\caption{Colorings and substuquandle polynomial of each coloring of $2^{k-}_1$.}
\label{poly4}
\end{table}

Using the coloring set of each stuck link, we obtain the following substuquandle polynomial invariants
\[ \phi _{Sstqp}(K_1,X)=u^{s_1^3 t_1^3 s_2 t_2 s_3^3 t_3^2 s_4^3 t_4^2 s_5^2 t_5}+3u^{ s_1^3 t_1^3 s_2 t_2 s_3^3 t_3^2 s_4^3 t_4^2 s_5^2 t_5+s_1^3 t_1^3 t_2 t_3^2 s_4^3 t_4^2 s_5 t_5+s_1^3 t_1^3 s_2 t_2 s_3^3 t_3^2 t_4^2 t_5}
\]
and

\[\phi _{Sstqp}(K_2,X)=u^{s_1^3 t_1^3 s_2 t_2 s_3^3 t_3^2 s_4^3 t_4^2 s_5^2 t_5}+3u^{s_1^3 t_1^3 s_2 t_2 s_3^3 t_3^2 s_4^3 t_4^2 s_5^2 t_5+s_1^3 t_1^3 s_2 t_2 s_3^3 t_3^2 t_4^2 t_5},\]
 this shows that our invariant is stronger than the $\hat{X}$ polynomial of \cite{B}.   
\end{example}

Now we will compute our new substuquandle polynomial invariant to distinguish the topology of RNA structure. First, we will consider an arc diagram of RNA folding, then use the transformation $T$ and apply the self-closure to obtain a stuck link diagram. We will then compute the substuquandle polynomial invariant using the stuck link diagram corresponding to the arc diagram of an RNA folding. Note that since the substuquandle polynomial invariant is unchanged by Reidemeister, the invariant only depends on the stuck link and not the diagram. 

\begin{example}
 Let $(\mathbb{Z}_4,*,R_1,R_2,R_3,R_4)$ be the stuquandle with operations $x*y =x$, 
 $R_1(x,y)= 3 x + y$,$R_2(x,y)=x + 3 y$, $R_3(x,y) = x + 2 y$, and $R_4(x,y)=2x+y$. 
 The operation tables are given by

 \[
\begin{tabular}{ r|  c  c   c c}
* & 0 &1 & 2 &3\\
\hline			
0& 0& 0& 0& 0\\
1& 1& 1& 1& 1\\ 
2& 2& 2& 2& 2\\
3& 3& 3& 3& 3
\end{tabular}
\qquad
\begin{tabular}{ r|  c  c   cc }
$R_1$ & 0 &1  & 2 & 3\\
  \hline			
0& 0& 1& 2& 3\\
1& 3& 0& 1& 2\\ 
2& 2& 3& 0& 1\\
3& 1& 2& 3& 0
\end{tabular}
\qquad
\begin{tabular}{ r|  c  c  c c }
$R_2$ & 0 &1 & 2 & 3\\
  \hline			
0& 0& 3& 2& 1\\
1& 1& 0& 3& 2\\ 
2& 2& 1& 0& 3\\
3& 3& 2& 1& 0
\end{tabular}
\]
\[
\begin{tabular}{ r|  c  c   c c}
$R_3$ & 0 &1  & 2 &3\\
  \hline			
0& 0& 2& 0& 2\\ 
1& 1& 3& 1& 3\\ 
2& 2& 0& 2& 0\\ 
3& 3& 1& 3& 1
\end{tabular}
\qquad
\begin{tabular}{ r|  c  c   c c}
$R_4$ & 0 &1  & 2 & 3\\
  \hline			
0& 0& 1& 2& 3\\ 
1& 2& 3& 0& 1\\ 
2& 0& 1& 2& 3\\ 
3& 2& 3& 0& 1
\end{tabular}
\]
We will consider the two arc diagrams of RNA foldings and their corresponding stuck links; see Figure~\ref{fig:arc1} and Figure~\ref{fig:arc2}.

\begin{figure}
    \centering
    \includegraphics[scale=.5]{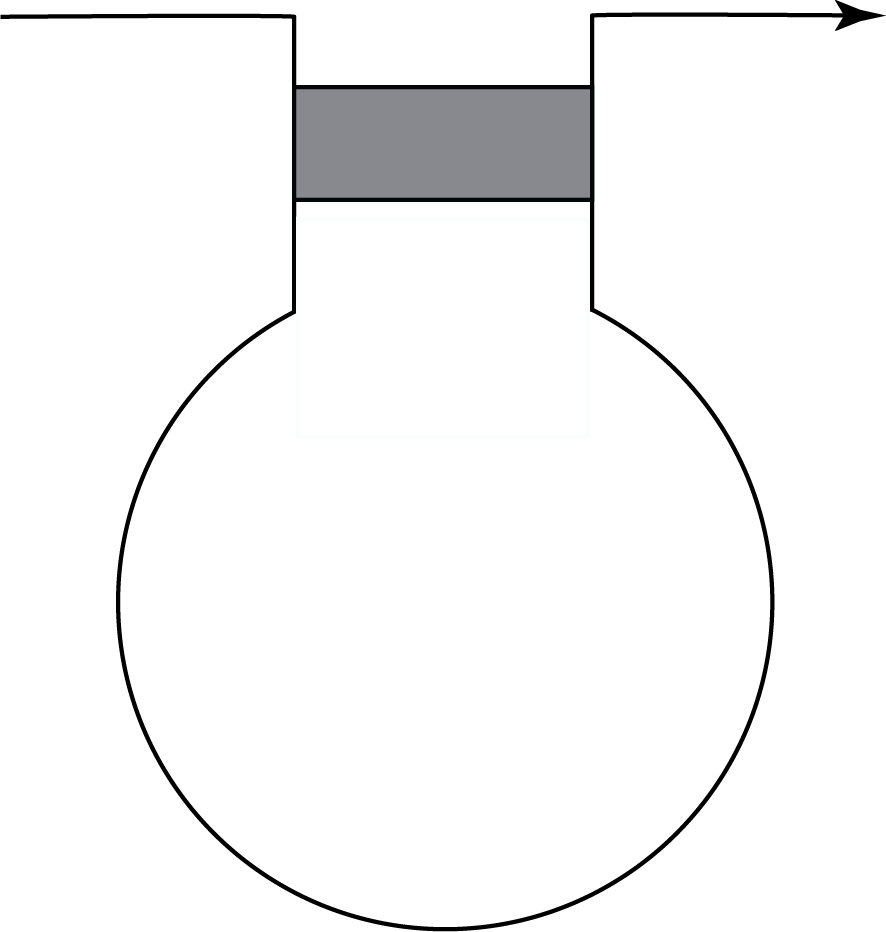}
    \hspace{1.5cm}
    \includegraphics[scale=.5]{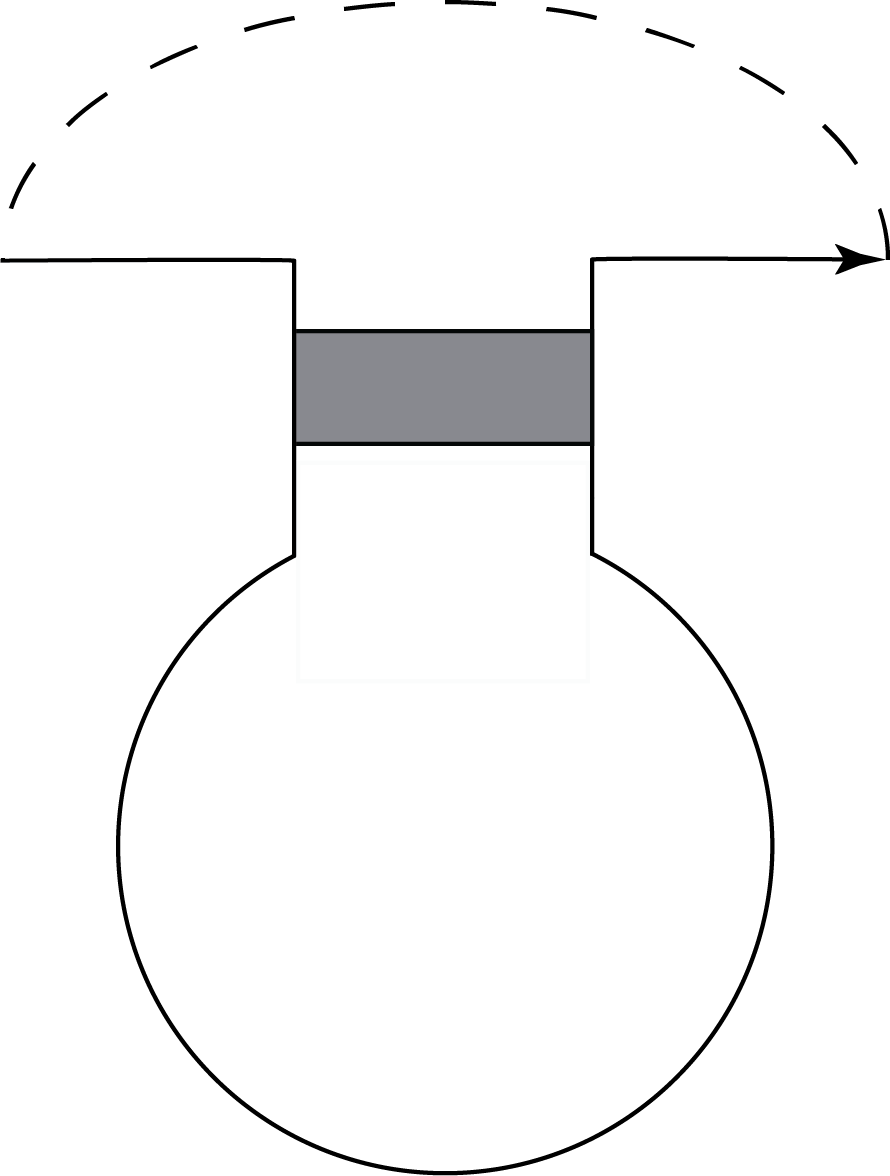}
    \put(-160,50){self-closure}
    \includegraphics{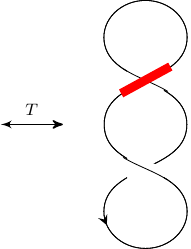}
    \put(-50,108){$a$}
    \put(-50,50){$b$}
    \put(0,50){$c$}
    \caption{Arc diagram and corresponding stuck link denoted by $K_1$.}
    \label{fig:arc1}
\end{figure}

\begin{figure}
    \centering
    \includegraphics[scale=.5]{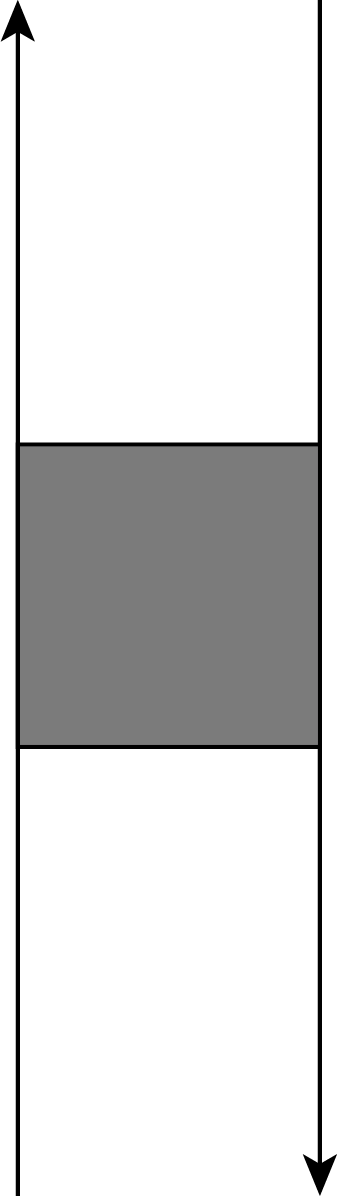}
    \hspace{2.5cm}
    \includegraphics[scale=.5]{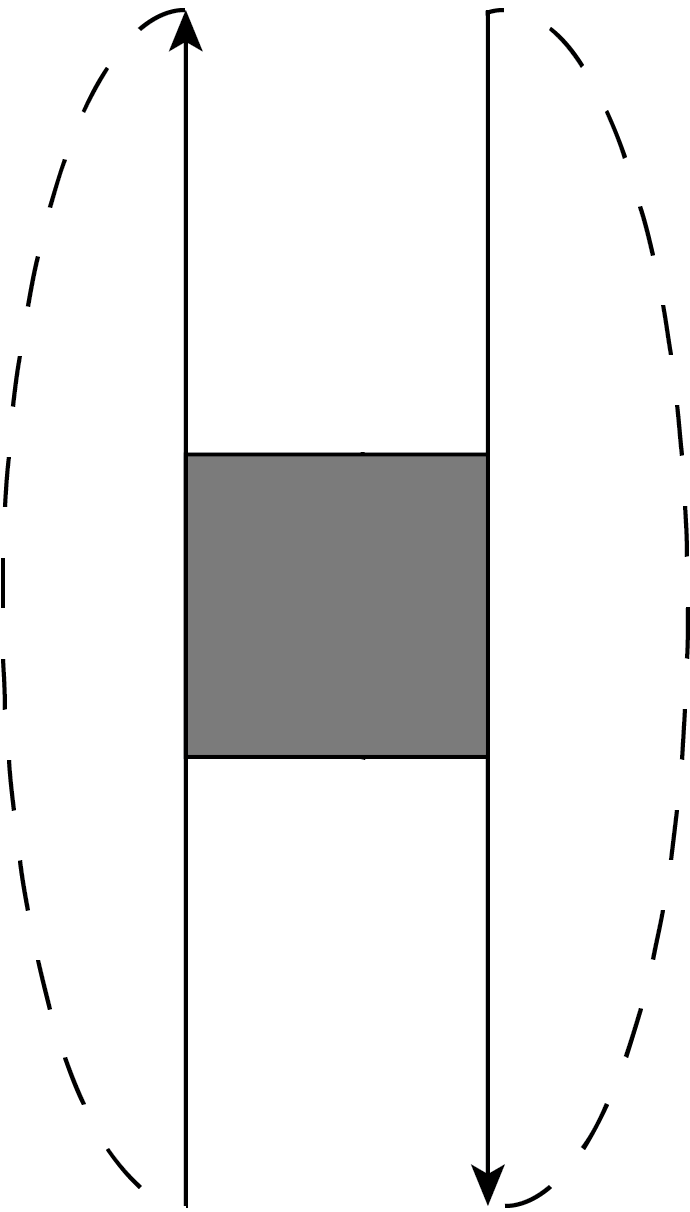}
    \put(-150,60){self-closure}
    \hspace{.5cm}
\includegraphics{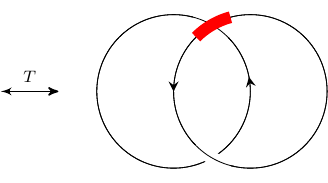}
\put(-30,50){$a$}
    \put(-25,80){$b$}
    \put(-120,50){$c$}
    \caption{Arc diagram and corresponding stuck link denoted by $K_2$.}
    \label{fig:arc2}
\end{figure}

With respect to the stuquandle defined above, the colorings for Figure~\ref{fig:arc1} are $\hom(\mathcal{STQ}(K_1),X) = \lbrace (0,0,0), (1,3,3), (2,2,2), (3,1,1)\rbrace$ and the colorings for Figure~\ref{fig:arc2} are 
$\hom(\mathcal{STQ}(K_2),X) = \lbrace (0,0,0), (0,2,0), (2,0,2), (2,2,2)\rbrace$.  In this case, the stuquandle counting invariant cannot distinguish the two arc diagrams. Now, we will consider the substuquandle polynomial invariant of the two arc diagrams. In the tables below, we collect our operations $r^i$ and $c^i$ values: 
\[
\begin{tabular}{ r|  c  c  }
$x$ & $r^1(x)$ & $c^1(x)$\\
  \hline
0&  4 & 4  \\
1&  4 & 4 \\
2&  4 & 4  \\
3&  4 & 4  

\end{tabular}
\qquad
\begin{tabular}{ r|  c  c  }
$x$ & $r^2(x)$ & $c^2(x)$\\
  \hline		
0&  1 & 2  \\
1&  1 & 0 \\
2&  1 & 2  \\
3&  1 & 0  
\end{tabular}
\qquad
\begin{tabular}{ r|  c  c  }
$x$ & $r^3(x)$ & $c^3(x)$\\
  \hline	
0&  1 & 4  \\
1&  1 & 0 \\
2&  1 & 0  \\
3&  1 & 0  

\end{tabular}
\]
\[
\begin{tabular}{ r|  c  c  }
$x$ & $r^4(x)$ & $c^4(x)$\\
  \hline	
0&  2 & 4  \\
1&  2 & 0 \\
2&  2 & 4  \\
3&  2 & 0

\end{tabular}
\qquad
\begin{tabular}{ r|  c  c  }
$x$ & $r^5(x)$ & $c^5(x)$\\
  \hline	
0&  1 & 1  \\
1&  1 & 1 \\
2&  1 & 1  \\
3&  1 & 1
\end{tabular}.
\]
We collect the colorings of each arc diagram and the substuquandle polynomial of the image of each coloring in Table~\ref{arcpoly1} and Table~\ref{arcpoly2}. 
\begin{table}[h]
{\begin{tabular}{ccc|c|c}
$f(a)$ & $f(b)$ & $f(c)$ &  $Im(f) \subset X$ &$Sstqp(Im(f)\subset X)$ \\
\hline
 $0$& $0$ & $0$ &  $\{ 0 \}$ & $s_1^4 t_1^4  s_2  t_2^2  s_3  t_3^4  s_4^2  t_4^4  s_5  t_5$\\
 $1$& $3$ & $3$  &  $\{ 0,1,2,3 \}$ &$2 s_1^4  t_1^4  s_2  s_3  s_4^2  s_5  t_5 + s_1^4  t_1^4  s2  t_2^2  s_3  t_3^4  s_4^2  t_4^4  s_5  t_5 + s_1^4  t_1^4  s_2  t_2^2  s_3  s_4^2  t_4^4  s_5  t_5$\\
  $2$& $2$ & $2$ &  $\{ 0,2 \}$ &$s_1^4  t_1^4  s2  t_2^2  s_3  t_3^4  s_4^2  t_4^4  s_5  t_5 + s_1^4  t_1^4  s_2  t_2^2  s_3  s_4^2  t_4^4  s_5  t_5$\\
   $3$& $1$ & $1$ &  $\{ 0,1,2,3 \}$ &$2 s_1^4  t_1^4  s_2  s_3  s_4^2  s_5  t_5 + s_1^4  t_1^4  s2  t_2^2  s_3  t_3^4  s_4^2  t_4^4  s_5  t_5 + s_1^4  t_1^4  s_2  t_2^2  s_3  s_4^2  t_4^4  s_5  t_5$\\
\end{tabular}}
\caption{Colorings and substuquandle polynomial of each coloring of $K_1$.}
\label{arcpoly1}
\end{table}
\begin{table}[h]
{\begin{tabular}{ccc|c|c}
$f(a)$ & $f(b)$ & $f(c)$ &  $Im(f) \subset X$ &$Sstqp(Im(f)\subset X)$ \\
\hline
 $0$& $0$ & $0$ &  $\{ 0 \}$ & $s_1^4 t_1^4  s_2  t_2^2  s_3  t_3^4  s_4^2  t_4^4  s_5  t_5$\\
 $0$& $2$ & $0$  &  $\{ 0,2 \}$ & $s_1^4  t_1^4  s2  t_2^2  s_3  t_3^4  s_4^2  t_4^4  s_5  t_5 + s_1^4  t_1^4  s_2  t_2^2  s_3  s_4^2  t_4^4  s_5  t_5$\\
  $2$& $0$ & $2$ &  $\{ 0,2 \}$ & $s_1^4  t_1^4  s2  t_2^2  s_3  t_3^4  s_4^2  t_4^4  s_5  t_5 + s_1^4  t_1^4  s_2  t_2^2  s_3  s_4^2  t_4^4  s_5  t_5$\\
   $2$& $2$ & $2$ &  $\{ 0,2 \}$ &$s_1^4  t_1^4  s2  t_2^2  s_3  t_3^4  s_4^2  t_4^4  s_5  t_5 + s_1^4  t_1^4  s_2  t_2^2  s_3  s_4^2  t_4^4  s_5  t_5$\\
\end{tabular}}
\caption{Colorings and substuquandle polynomial of each coloring of $K_2$.}
\label{arcpoly2}
\end{table}
Using the coloring set of each arc diagram, we obtain the following substuquandle polynomial invariants

\begin{eqnarray*}
\phi _{Sstqp}\begin{pmatrix}\includegraphics[scale=.2]{Figure15a},X\end{pmatrix}=&&u^{s_1^4 t_1^4  s_2  t_2^2  s_3  t_3^4  s_4^2  t_4^4  s_5  t_5}+ u^{s_1^4  t_1^4  s2  t_2^2  s_3  t_3^4  s_4^2  t_4^4  s_5  t_5 + s_1^4  t_1^4  s_2  t_2^2  s_3  s_4^2  t_4^4  s_5  t_5}\\&+&2u^{ 2 s_1^4  t_1^4  s_2  s_3  s_4^2  s_5  t_5 + s_1^4  t_1^4  s2  t_2^2  s_3  t_3^4  s_4^2  t_4^4  s_5  t_5 + s_1^4  t_1^4  s_2  t_2^2  s_3  s_4^2  t_4^4  s_5  t_5}
\end{eqnarray*}
and
\[
\phi _{Sstqp}\begin{pmatrix}\includegraphics[scale=.2]{Figure16a},X\end{pmatrix}=u^{s_1^4 t_1^4  s_2  t_2^2  s_3  t_3^4  s_4^2  t_4^4  s_5  t_5}+3u^{s_1^4  t_1^4  s2  t_2^2  s_3  t_3^4  s_4^2  t_4^4  s_5  t_5 + s_1^4  t_1^4  s_2  t_2^2  s_3  s_4^2  t_4^4  s_5  t_5}.\]

Thus, the RNA foldings are distinguished since 
\[
\phi _{Sstqp}\begin{pmatrix}\includegraphics[scale=.2]{Figure15a},X\end{pmatrix} \neq
\phi _{Sstqp}\begin{pmatrix}\includegraphics[scale=.2]{Figure16a},X\end{pmatrix}.
\]

\end{example}

\section*{Acknowledgement} 
Mohamed Elhamdadi was partially supported by Simons Foundation collaboration grant 712462.




\bibliography{Ref}
\bibliographystyle{plain}

\end{document}